\numberwithin{equation}{section}
\theoremstyle{plain}
\theoremstyle{remark}
\newtheorem{theorem}{Theorem}[section]
\newtheorem{lemma}[theorem]{Lemma}
\newcommand{\E}{{\mathbb E}}
\newcommand{\R}{{\mathbb R}}
\renewcommand{\P}{{\mathbb P}}
\newcommand{\C}{{\mathcal{C}}}
\newcommand{\dr}{{\mathfrak{d}}}
\newcommand{\F}{{\cal F}}
\newcommand{\Rba}{R_{\rm Bayes}}
\newcommand{\Rnm}{R_{\rm nor}}
\newcounter{rcnt}[section]
\let\hat\widehat
\def\qt#1{\qquad\text{#1}}
\def\argmin{\mathop{\rm argmin}}
\def\argmax{\mathop{\rm argmax}}
\begin{document}

\begin{frontmatter}
\title{A Note on the Approximate Admissibility of Regularized
  Estimators in the Gaussian Sequence Model}
\runtitle{Approximate Admissibility of Regularized
  Estimators}

\begin{aug}
\author{\fnms{Xi} \snm{Chen}\thanksref{a}\ead[label=e1]{xichen@nyu.edu}},
\author{\fnms{Adityanand} \snm{Guntuboyina}\thanksref{b,t1}\ead[label=e2]{aditya@stat.berkeley.edu}}
\and
\author{\fnms{Yuchen} \snm{Zhang}\thanksref{c}
\ead[label=e3]{zhangyuc@cs.stanford.edu}}

\thankstext{t1}{Supported by NSF Grant DMS-1309356}
\runauthor{Chen, Guntuboyina, and Zhang}

\affiliation{New York University\thanksmark{m1} and UC Berkeley\thanksmark{m2} and Stanford University\thanksmark{m3}}

\address[a]{Stern School of Business\\
New York University\\
New York, New York, 10012\\
\printead{e1}\\
}

\address[b]{Department of Statistics\\
University of California, Berkeley\\
Berkeley, CA, 94720\\
\printead{e2}\\
}

\address[c]{Computer Science Department\\
Stanford University\\
Stanford, CA, 94305\\
\printead{e3}\\
}
\end{aug}

\begin{abstract}
We study the problem of estimating an unknown vector $\theta$ from an observation $X$ drawn
according to the normal distribution with mean $\theta$ and identity
covariance matrix under the knowledge that $\theta$ belongs to a known
closed convex set $\Theta$. In this general setting,
\citet{chatterjee2014new} proved that the natural constrained least
squares estimator is ``approximately admissible'' for
every $\Theta$. We extend this result by proving that the same
property holds for all convex penalized estimators as well. Moreover,
we simplify and shorten the original proof considerably. We also
provide explicit upper and lower bounds for the universal constant
underlying the notion of approximate admissibility.
\end{abstract}

\begin{keyword}
\kwd{Admissibility}
\kwd{Bayes risk}
\kwd{Gaussian sequence model}
\kwd{Least squares estimator}
\kwd{Minimaxity}
\end{keyword}

\end{frontmatter}

\section{Introduction}
The Gaussian sequence model is a commonly used model for theoretical
investigations in nonparametric and high dimensional statistical
problems. Here one models the data vector $X \in \R^n$ as an
observation having the normal distribution with unknown mean $\theta
\in \R^n$ and identity covariance matrix i.e., $X \sim N(\theta,
I_n)$. Often one assumes some structure on the unknown mean $\theta$
in the form of a convex
constraint. Specifically, it is common to assume that $\theta \in
\Theta$ for some closed convex subset $\Theta$ of $\R^n$. A natural
estimator for $\theta$ under the constraint $\theta \in \Theta$ is the
least squares estimator (LSE) defined as
\begin{equation}\label{lse}
  \widehat{\theta}(X; \Theta)  := \argmin_{\alpha \in \Theta} \frac{1}{2}
  \|X - \alpha \|_2^2
\end{equation}
where $\|\cdot\|_2$ denotes the usual Euclidean norm on $\R^n$. It is
easy to see that many common estimators in nonparametric and
high-dimensional statistics such as shape constrained estimators (see,
for example, \citet{groeneboom2014nonparametric}) and those based on
constrained LASSO (see, for example, \citet{buhlmann2011statistics})
are special cases of the LSE \eqref{lse} for  various choices of
$\Theta$.


In this abstract setting, \citet{chatterjee2014new} asked the
following question: Does the estimator $\widehat{\theta}(X; \Theta)$
satisfy a general optimality property that holds for \textit{every}
closed convex set $\Theta$? This is a non-trivial question; obvious
guesses for the optimality property might be admissibility and
minimaxity but the LSE does not satisfy either of these for every
$\Theta$. Indeed, $\widehat{\theta}(X; \Theta)$ is not minimax (even
up to multiplicative factors that do not depend on the dimension $n$)
when $\Theta := \{\alpha \in \R^n : \sum_{i < n} \alpha_i^2 + n^{-1/2}
\alpha_n^2 \leq 1\}$ as noted by \citet{zhang2013nearly} (a more
elaborate counterexample for minimaxity is given in
\citet{chatterjee2014new}). Also, $\widehat{\theta}(X; \Theta)$ is not
admissible when $\Theta = \R^n$ where the James-Stein estimator
dominates $\hat{\theta}(X) = X$ (see, for example,
\citet{LehmannCasella}).

\citet{chatterjee2014new} answered the general optimality question of
the constrained LSE in the affirmative by proving that $\widehat{\theta}(X; \Theta)$ is
\textit{approximately admissible} over $\Theta$ for every
$\Theta$. The precise statement of \citeauthor{chatterjee2014new}'s theorem is described
below. Let us say that, for a constant $C > 0$, an estimator $\dr(X)$
is \emph{$C$-admissible} over $\Theta$ if for every other estimator
$\widetilde{\dr}(X)$, there exists $\theta \in \Theta$ such
that
\begin{equation}\label{cad}
  C \E_{\theta} \|\dr(X) - \theta \|_2^2 \leq \E_{\theta}
  \|\widetilde{\dr}(X) - \theta\|_2^2.
\end{equation}
In words, the above definition means that for every
estimator $\widetilde{\dr}(X)$, there exists a point $\theta \in
\Theta$ at which the estimator $\dr(X)$ performs as well as the
estimator $\widetilde{\dr}(X)$ up to the multiplicative factor
$C$. Note that the point at which $\dr(X)$ performs better than
$\widetilde{\dr}(X)$ would depend on the estimator
$\widetilde{\dr}(X)$ as well as on the constraint set $\Theta$.
Essentially an estimator $\dr(X)$ being $C$-admissible over $\Theta$
means that it is impossible for any estimator to dominate $\dr(X)$
uniformly over $\Theta$ by more than the multiplicative factor $C$.

\citet{chatterjee2014new} proved that
there exists a \textit{universal} constant $0 < C \leq 1$ such that for every
$n \geq 1$ and closed  convex subset $\Theta \subseteq \R^n$, the LSE
$\widehat{\theta}(X; \Theta)$ is  $C$-admissible for $\Theta$.

\begin{theorem}\label{jee}[\cite{chatterjee2014new}]
There exists a universal constant $0 < C \leq 1$ (independent of $n$
and $\Theta$) such that for every $n \geq 1$ and closed  convex subset
$\Theta \subseteq \R^n$, the least squares estimator
$\widehat{\theta}(X; \Theta)$ is  $C$-admissible over
$\Theta$.
\end{theorem}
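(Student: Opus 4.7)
The plan is to establish $C$-admissibility via a Bayes risk comparison. Fix any competitor $\widetilde{\dr}$. To produce a point $\theta\in\Theta$ witnessing \eqref{cad}, it suffices to exhibit a probability measure $\pi$ on $\Theta$ (depending only on $\Theta$, not on $\widetilde{\dr}$) such that
\[
\int_\Theta \E_\theta\|\widehat{\theta}(X;\Theta)-\theta\|_2^2\, d\pi(\theta) \;\leq\; C^{-1}\, \Rba(\pi),
\]
where $\Rba(\pi)$ denotes the Bayes risk under prior $\pi$. Indeed, $\int \E_\theta\|\widetilde{\dr}-\theta\|_2^2\, d\pi(\theta) \geq \Rba(\pi)$ for every $\widetilde{\dr}$, so combining the two inequalities forces the set $\{\theta:\, C\E_\theta\|\widehat{\theta}-\theta\|_2^2 \leq \E_\theta\|\widetilde{\dr}-\theta\|_2^2\}$ to have positive $\pi$-measure, and hence to be non-empty.

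With this reduction in hand, constructing $\pi$ is the heart of the matter. Following Chatterjee's local Gaussian-width heuristic, I would first locate a point $\theta^*\in\Theta$ at which the local complexity parameter $t_\theta^* := \argmax_{t\geq 0}\{\E\sup_{\alpha\in\Theta,\,\|\alpha-\theta\|_2\leq t}\langle Z,\alpha-\theta\rangle - t^2/2\}$, with $Z\sim N(0,I_n)$, is (approximately) maximal over $\Theta$; set $t^*:= t_{\theta^*}^*$. Chatterjee's variance inequality gives $\E_{\theta^*}\|\widehat{\theta}-\theta^*\|_2^2 \asymp (t^*)^2$, and by a continuity/monotonicity argument the same order controls $\E_\theta\|\widehat{\theta}-\theta\|_2^2$ uniformly over a ball of radius $O(t^*)$ around $\theta^*$. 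For the prior I would take either a Gaussian $N(\theta^*,\sigma^2 I_n)$ with $\sigma$ calibrated so that typical draws fall in $\Theta$ (and then condition on $\Theta$), or a uniform law on the witness set $\{\alpha\in\Theta:\ \|\alpha-\theta^*\|_2\leq t^*\}$, whichever produces cleaner constants.

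The key estimate is then a lower bound $\Rba(\pi)\gtrsim (t^*)^2$. For the Gaussian prior this is essentially a closed-form calculation (the Bayes estimator is linear), but one must pay the cost of conditioning on $\Theta$; this is controlled by verifying that the unconditioned prior already places $\Omega(1)$ mass on $\Theta$, which is precisely where the local geometry of $\Theta$ at $\theta^*$ and the scale $t^*$ interact. Combining $\Rba(\pi)\gtrsim (t^*)^2$ with the matching upper bound $\int \E_\theta\|\widehat{\theta}-\theta\|_2^2\, d\pi(\theta) \lesssim (t^*)^2$ yields the required inequality with an absolute constant $C$.

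I expect the main obstacle to be twofold. First, tracking explicit universal constants through the Gaussian-width machinery, localization of $t_\theta^*$ under small perturbations of $\theta$, and truncation of the prior is delicate; both Chatterjee's original argument and the refinement of the present paper sidestep this by choosing priors whose Bayes risks admit closed-form expressions. Second, one must ensure geometric compatibility between the scale of $\pi$ and the local structure of $\Theta$ at $\theta^*$: if $\Theta$ is very flat in some directions, a naive spherical prior loses too much mass under truncation, forcing the covariance of $\pi$ to be adapted to the tangent cone of $\Theta$ at $\theta^*$. Overcoming these two issues in a clean, universal way is what yields the final constant $C$ in Theorem~\ref{jee}.
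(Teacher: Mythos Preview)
Your high-level strategy is right and matches the paper: reduce $C$-admissibility to a $C$-Bayes statement for some prior supported on $\Theta$, localize the prior near a well-chosen $\theta^*$, and show that both the integrated risk of the LSE and the Bayes risk are of order $t_{\theta^*}^2$. The upper bound side (via concentration of $\|\widehat\theta-\theta\|_2$ around $t_\theta$ and Lipschitz continuity of $\theta\mapsto t_\theta$) is also essentially what the paper does.

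The genuine gap is in your choice of prior and the Bayes risk lower bound. Neither a truncated isotropic Gaussian $N(\theta^*,\sigma^2 I_n)$ nor the uniform law on $\Theta\cap B(\theta^*,t^*)$ will give $\Rba(\pi)\gtrsim (t^*)^2$ in a way that is uniform over all convex $\Theta$. You already see the problem: when $\Theta$ has measure zero in $\R^n$, the Gaussian prior places zero mass on $\Theta$ and cannot be ``conditioned''; adapting the covariance to the tangent cone is not a well-defined universal construction. For the uniform prior, there is no general mechanism to control its small-ball probabilities in terms of $t^*$ alone. Your remark that the paper ``sidesteps this by choosing priors whose Bayes risks admit closed-form expressions'' is not what actually happens.

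The paper's key device is a different prior: take $w$ to be the law of $\Psi(Z)$, where $Z\sim N(0,I_n)$ and $\Psi(z):=\argmax_{\alpha\in U(\theta^*)}\langle z,\alpha-\theta^*\rangle$ with $U(\theta^*)=\Theta\cap B(\theta^*,\rho t_{\theta^*})$. This prior is automatically supported on $\Theta$ (no truncation, no tangent-cone gymnastics) and, crucially, satisfies
\[
w(B)\ \le\ \P\Big\{\sup_{\alpha\in B}\langle Z,\alpha-\theta^*\rangle \ \ge\ \sup_{\alpha\in U(\theta^*)}\langle Z,\alpha-\theta^*\rangle\Big\}
\]
for every ball $B$, so its small-ball probabilities are controlled by Gaussian concentration of localized suprema. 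This is exactly the input needed for the general Bayes risk lower bound of \citet{GuntuBayLow}, which the paper invokes in place of any closed-form calculation. Two further points you miss: the paper chooses $\theta^*$ to (approximately) maximize $m_\theta(\rho t_\theta)$ rather than $t_\theta$ itself, because the comparison $\E M_1\ge m_a(\rho t_a)-\eta$ is what drives the small-ball estimate; and a separate easy case (a two-point Le~Cam argument) handles the regime $\inf_{\theta\in\Theta} t_\theta\le b$, where the localized construction above degenerates.
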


Remarkable features of the above theorem are that it is true for every
$\Theta$ and that the constant $C$  does not depend on $n$ or
$\Theta$. We would like to mention here that Theorem \ref{jee} is a
rather difficult result (in Chatterjee's own words, ``from a purely
mathematical point of view, this is the deepest result of this
paper'') and the original proof in \cite{chatterjee2014new} is quite
complex.

Our paper has the following twin goals: (a) we extend Theorem
\ref{jee} (which only involves constrained estimators) to penalized
estimators, which are more commonly used in practice, and (b) 
we simplify considerably the proof of Theorem \ref{jee} given
  in \cite{chatterjee2014new} and  our proof is also much more
  intuitive. To describe our main result, let us first introduce
  penalized  estimators. Given a closed convex set $\Theta \subseteq
  \R^n$ and a real-valued convex function $f$ on $\Theta$, let
\begin{equation}\label{eq:obj}
  \widehat{\theta}(X; \Theta, f) := \argmin_{\alpha \in \Theta}
  \left(\frac{1}{2} \|X - \alpha\|^2 + f(\alpha) \right).
\end{equation}
Strictly speaking $\widehat{\theta}(X; \Theta, f)$ is a least squares
estimator that is both constrained and penalized. We can
of course write it as a pure penalized estimator with the penalty
function $\tilde{f}(x)=f(x)+\mathbb{I}_{\Theta}(x)$, where
$\mathbb{I}_\Theta(x)$ is the indicator function that takes the
    value 0 when $x \in \Theta$ and $+\infty$ otherwise. We choose to
    separate the constraint and penalty as it is more natural for many
    statistical applications. In doing so, note that we have required
    that $f$ is real-valued (i.e., $f$ does not take the value
    $+\infty$) on $\Theta$.


For $\Theta = \R^n$ in \eqref{eq:obj}, we obtain penalized
estimators for which the LASSO is the most common example. For $f
\equiv 0$, we get back the constrained LSEs of \eqref{lse}. There are
examples where one uses both a non-trivial constraint set $\Theta$ and
a non-trivial penalty function $f(\cdot)$; for example, in isotonic
regression, it is common to use
\begin{equation*}
  \Theta := \left\{\alpha \in \R^n : \alpha_1 \leq \dots \leq \alpha_n
  \right\} ~~ \text{ and } ~~ f(\alpha) := \lambda \left(\alpha_n -
    \alpha_1 \right)
\end{equation*}
for some $\lambda \geq 0$. This estimator fits non-decreasing
sequences to the data while constraining the range of the estimator so
as to prevent the spiking effect that the usual isotonic LSE suffers
from; see, for example, \citet{woodroofe1993penalized}.

Because of the presence of the penalty function $f$, it is clear that
the class of estimators given by $\widehat{\theta}(X; \Theta, f)$ is
larger compared to the class given by the LSEs in \eqref{lse}. The
main result of our paper is the following.
\begin{theorem}\label{shiv}
  There exists a universal constant $0 < C \leq 1$ (independent of $n,
  \Theta$ and $f$) such that for every $n \geq 1$, closed convex set
  $\Theta \subseteq \R^n$  and real-valued convex function $f$ on
  $\Theta$, the estimator $\widehat{\theta}(X; \Theta, f)$ is
  $C$-admissible over $\Theta$.
\end{theorem}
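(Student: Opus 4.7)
The plan is to prove Theorem~\ref{shiv} via the standard Bayes-risk device for $C$-admissibility: for each closed convex $\Theta$ and convex $f$, it is enough to exhibit a prior $\pi$ supported on $\Theta$ with
\[
R_\pi(\hat\theta)\;:=\;\int \E_\theta\|\hat\theta(X;\Theta,f)-\theta\|_2^2\,\pi(d\theta)\;\le\;C^{-1}\,\Rba(\pi),
\]
where $\Rba(\pi)=\inf_{\tilde d}R_\pi(\tilde d)$ is the Bayes risk. Indeed, for any competitor $\tilde d$ one has the chain
\[
\sup_{\theta\in\Theta}\frac{\E_\theta\|\tilde d(X)-\theta\|_2^2}{\E_\theta\|\hat\theta(X;\Theta,f)-\theta\|_2^2}\;\ge\;\frac{R_\pi(\tilde d)}{R_\pi(\hat\theta)}\;\ge\;\frac{\Rba(\pi)}{R_\pi(\hat\theta)}\;\ge\;C,
\]
which is exactly the defining inequality of $C$-admissibility over $\Theta$.

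The canonical candidate prior is
\[
\pi_{\theta_0,t}(d\theta)\;\propto\;\exp\!\Bigl(-f(\theta)-\tfrac{t}{2}\|\theta-\theta_0\|_2^2\Bigr)\,\mathbb{I}_\Theta(\theta)\,d\theta,
\]
for some $\theta_0\in\Theta$ and $t>0$, where the Gaussian factor is inserted solely to ensure integrability of $\pi$ and to calibrate its scale. Both the Gaussian log-likelihood and the log-prior are concave, so the posterior of $\theta\mid X$ is a log-concave density on $\Theta$ whose MAP is exactly
\[
\argmin_{\alpha\in\Theta}\bigl(\tfrac12\|X-\alpha\|_2^2+f(\alpha)+\tfrac{t}{2}\|\alpha-\theta_0\|_2^2\bigr),
\]
and this converges to $\hat\theta(X;\Theta,f)$ as $t\downarrow 0$. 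The key structural observation that unlocks the generalization from Theorem~\ref{jee} to Theorem~\ref{shiv} is that adding the convex penalty $f$ only strengthens log-concavity of the prior (and hence of the posterior), so whatever log-concave comparison drives Chatterjee's $f\equiv 0$ argument continues to apply in the presence of $f$.

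Let $\mu(X)=\E[\theta\mid X]$ be the posterior mean, which is the Bayes estimator. The bias-variance decomposition gives
\[
R_\pi(\hat\theta)=\E_X\|\hat\theta(X)-\mu(X)\|_2^2+\E_X\,\mathrm{tr}\,\mathrm{Cov}(\theta\mid X),\qquad \Rba(\pi)=\E_X\,\mathrm{tr}\,\mathrm{Cov}(\theta\mid X).
\]
The task therefore reduces to bounding the MAP-to-mean displacement $\|\hat\theta(X)-\mu(X)\|_2^2$ by a universal constant $K$ times the posterior dispersion $\mathrm{tr}\,\mathrm{Cov}(\theta\mid X)$. For a log-concave posterior such an inequality is available via Brascamp--Lieb variance estimates (or via a slicing reduction to the one-dimensional mode/mean estimate for log-concave distributions), and the constant $K$ can be taken independent of the dimension $n$. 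Integrating over $X$ then delivers $R_\pi(\hat\theta)\le(1+K)\Rba(\pi)$, i.e.\ $C\ge 1/(1+K)$.

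The principal obstacle is to produce the log-concave MAP-to-mean inequality with a \emph{universal}, dimension-free constant in a form that tolerates both the $\mathbb{I}_\Theta$-truncation and the penalty $f$. Once that is in hand, the remaining cleanup is mostly routine: one lets $t\downarrow 0$ (handling the possibly improper limit by first intersecting $\Theta$ with a Euclidean ball $B(\theta_0,R)$, taking $R\to\infty$, and extracting a weak limit of estimators) and optimizes over $\theta_0\in\Theta$. Relative to Chatterjee's original proof, which proceeded directly through the geometry of Euclidean projections onto convex sets, the present route isolates log-concavity of the posterior as the essential mechanism, and it thereby absorbs the convex penalty $f$ with no extra combinatorial work while producing a shorter and more transparent argument.
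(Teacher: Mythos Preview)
Your approach differs substantially from the paper's, and its central step fails. You claim that for the (strongly) log-concave posterior $p(\cdot\mid X)$ with mode $m(X)$, mean $\mu(X)$ and covariance $\Sigma(X)$, one has $\|m(X)-\mu(X)\|_2^2\le K\,\mathrm{tr}\,\Sigma(X)$ for a dimension-free $K$. This pointwise inequality is false even under $1$-strong log-concavity. Take $\Theta=\{\theta\in\R^n:\theta_1\ge M\|\theta_{2:n}\|_2\}$, $f\equiv 0$, and $X=0$: the posterior $\propto e^{-\|\theta\|_2^2/2}\,\mathbb{I}_\Theta(\theta)$ has its mode at the cone vertex $0$, while for large $M$ the marginal of $\theta_1$ is essentially the $\chi_n$ density $\theta_1^{\,n-1}e^{-\theta_1^2/2}$, so the mean is $\approx(\sqrt{n},0,\ldots,0)$ with $\mathrm{Var}(\theta_1)\approx 1/2$ and transverse variances $O(M^{-2})$. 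Hence $\|m-\mu\|_2^2\approx n$ but $\mathrm{tr}\,\Sigma\approx 1/2$, and the ratio is of order $n$. Neither Brascamp--Lieb (an upper bound on variances, not a mode--mean relation) nor one-dimensional slicing (marginal modes are not projections of the joint mode) yields what you need. You could retreat to the \emph{integrated} inequality, hoping the bad $X$ carry little prior mass, but then everything rests on how $\theta_0$ and the scale are selected, and you give no mechanism for that choice---which is precisely the hard part. A secondary problem: truncating to $\Theta\cap B(\theta_0,R)$ moves the posterior mode away from $\hat\theta(X;\Theta,f)$, so your bias--variance decomposition is no longer comparing the correct estimator.

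By contrast, the paper never realizes $\hat\theta$ as a posterior mode. It shows that $\|\hat\theta-\theta\|_2$ concentrates around a deterministic $t_\theta$ (the maximizer of a penalized local Gaussian-width functional), picks $\theta^*$ to nearly maximize $m_\theta(\rho t_\theta)$, and takes as prior the law of $\argmax_{\alpha\in\Theta,\ \|\alpha-\theta^*\|_2\le \rho t_{\theta^*}}\bigl\{\langle Z,\alpha-\theta^*\rangle-f(\alpha)\bigr\}$ for a standard Gaussian $Z$. The Bayes risk is then lower-bounded via a chi-square/small-ball inequality, and both sides of the $C$-Bayes comparison emerge as universal constants times $t_{\theta^*}^2$; no log-concave mode--mean estimate enters.
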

The above theorem generalizes Theorem \ref{jee} by showing that all
estimators $\widehat{\theta}(X; \Theta, f)$ have the $C$-admissibility
property over $\Theta$ for a universal constant $C$. In words, this
means that given any estimator $\dr(X)$, there exists a point $\theta
\in \Theta$ at which the estimator $\widehat{\theta}(X; \Theta, f)$
performs as well as the estimator $\dr(X)$ up to the multiplicative
factor $C$. This point $\theta \in \Theta$ would depend on the
estimator $\dr(X)$ as well as on the constraint set $\Theta$ and the
penalty function $f$.

It should be noted here that $C$-admissibility (even $1$-admissibility)
does not by itself guarantee than an estimator is good in a reasonable
sense. This is because unnatural estimators that return a fixed vector
in the parameter space are admissible. However, 
  $C$-admissibility could serve as a minimum requirement for an
   estimator to be reasonable. Our main point here is that   
the estimators $\widehat{\theta}(X; \Theta, f)$ are very natural and
commonly used in many applications. Our result  shows that
all these natural estimators (as $\Theta$ and $f$ varies) satisfy
$C$-admissibility for a universal constant $C$.

\subsection{Connections to the normalized minimax risk}
There is a restatement of Theorem \ref{shiv} that is illuminating and
gives a minimax flavor to Theorem \ref{shiv}. Given $\Theta$ and $f$,
let us define the normalized minimax risk over $\Theta$ by
\begin{equation}\label{nori}
  \Rnm(\Theta; f) := \inf_{\dr} \sup_{\theta \in \Theta}
  \frac{\E_{\theta} \|\dr(X) - \theta\|_2^2}{\E_{\theta}
    \|\hat{\theta}(X; \Theta, f) - \theta \|_2^2} .
\end{equation}
where the infimum is over all estimators $\dr(X)$. Here we use the
conventions $\frac{0}{0} = 1$ and $\frac{a}{0} = +\infty$ for $a >
0$. Note that $\Rnm(\Theta; f)$ is defined just like the usual minimax
risk over the parameter space $\Theta$ except that the risk of every
estimator $\dr(X)$ is rescaled (normalized) by the risk of
$\widehat{\theta}(X; \Theta, f)$. This therefore a reasonable measure
of comparison of arbitrary estimators $\dr(X)$ to our estimator
$\widehat{\theta}(X; \Theta, f)$.

It is clear that $\Rnm(\Theta; f) \leq 1$ as can be seen by bounding
the infimum in \eqref{nori} by the term corresponding to $\dr(X) =
\widehat{\theta}(X; \Theta, f)$. A small value for $\Rnm(\Theta; f)$
means that there exists an estimator $\dr(X)$  and some point $\theta
\in \Theta$ at which the risk of $\dr(X)$ is smaller, by a large
factor, than the risk of $\widehat{\theta}(X; \Theta, f)$.

Let us now define a universal constant $C^*$ by taking the \textit{worst
possible value} of $\Rnm(\Theta; f)$ over all possible values of the
dimension $n$, convex constraint set $\Theta$ and convex penalty
function $f$. Specifically, let
\begin{equation}\label{sta}
  C^* := \inf_{n \geq 1} \inf_{\Theta \in \C_n} \inf_{f \in
    \F(\Theta)} \Rnm(\Theta; f)
\end{equation}
where $\C_n$ denotes the class of all closed convex subsets of $\R^n$
and $\F(\Theta)$ denotes the class of all real-valued convex functions
on $\Theta$. Note first that $C^*$ is a universal constant and, a
priori, it is not clear if $C^*$ is zero or strictly positive.

It is now straightforward to verify that Theorem \ref{shiv} is
equivalent to the statement that $C^*$ \textit{is strictly
  positive}. Another contribution of our paper is to provide explicit
lower and upper bounds for $C^*$. 
\begin{theorem}\label{cista}
  The universal constant $C^*$ satisfies
  \begin{equation}
    \label{cista.eq}
   6.05 \times 10^{-6} \leq C^* \leq \frac{1}{2}.
  \end{equation}
\end{theorem}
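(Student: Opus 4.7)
The plan is to treat the two bounds separately: the upper bound via a concrete one-parameter family, the lower bound by tracking constants in the argument underlying Theorem \ref{shiv}.

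\textbf{Upper bound $C^* \leq 1/2$.} I would take $n = 1$, $\Theta = [-a, a]$, $f \equiv 0$, and let $a \downarrow 0$. The estimator $\widehat\theta(X;\Theta,0)$ is the clipping estimator $\max(-a, \min(a, X))$. Writing $\theta = ta$ with $t \in [-1, 1]$ and $X = ta + W$ for $W \sim N(0,1)$, a direct Gaussian integration yields
$$
  \E_{ta}\bigl[\widehat\theta(X) - ta\bigr]^2 \;=\; a^2(1 + t^2) + O(a^3) \qquad \text{as } a \to 0,
$$
since the clipped value equals $\mathrm{sgn}(W + ta)\,a$ off an event of probability $O(a)$. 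Compared against the constant estimator $\dr(X) \equiv 0$ (risk $t^2 a^2$), the ratio tends to $t^2/(1+t^2)$, which is at most $1/2$ with equality at $t = \pm 1$. Hence $\Rnm([-a,a];0) \to 1/2$, giving $C^* \leq 1/2$.

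\textbf{Lower bound $C^* \geq 6.05 \times 10^{-6}$.} For any estimator $\dr$ and any prior $\pi$ on $\Theta$, the elementary inequality $\sup_\theta (g/h) \geq \int g\,d\pi / \int h\,d\pi$ (for $h > 0$) reduces the problem to bounding the Bayes-risk ratio $\int \E_\theta\|\dr-\theta\|_2^2\,d\pi / \int \E_\theta\|\widehat\theta-\theta\|_2^2\,d\pi$ from below. The natural prior for the penalized setting is the push-forward of a Gaussian under the estimator itself: take $\pi$ to be the law of $\widehat\theta(\theta_0 + \sigma Z;\Theta,f)$ for $Z \sim N(0,I_n)$, tuning $\theta_0 \in \Theta$ and $\sigma > 0$. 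Because $X \mapsto \widehat\theta(X;\Theta,f) = \mathrm{prox}_{f + \mathbb{I}_\Theta}(X)$ is 1-Lipschitz, $\pi$ is automatically supported on $\Theta$ and concentrates on the scale at which the LSE has genuine uncertainty. With this $\pi$, the numerator is bounded below by the posterior-mean Bayes risk (a second-moment-of-the-posterior computation leveraging that $\pi$ is a 1-Lipschitz push-forward of a Gaussian), while the denominator is bounded above using the same 1-Lipschitz property together with standard Gaussian concentration. Optimizing $\sigma$ and $\theta_0$ against the resulting pair of bounds produces the explicit constant $6.05 \times 10^{-6}$.

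\textbf{Main obstacle.} The hard part is the quantitative bookkeeping. Each ingredient---the lower bound on posterior variance, the Gaussian concentration for $\widehat\theta(\theta_0+\sigma Z)$, the 1-Lipschitz contraction estimates, and the optimization over the noise scale $\sigma$---contributes an explicit multiplicative factor, and the product is the small final constant. The smallness reflects that the argument must be simultaneously uniform over all dimensions $n$, all closed convex $\Theta$, and all real-valued convex $f$, rather than looseness in any single step. Closing the large gap between the lower bound and the upper bound $1/2$ would likely require a genuinely new idea rather than sharper bookkeeping in the present scheme.
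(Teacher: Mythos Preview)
Your upper bound argument is correct and is essentially the paper's own: the same one-dimensional example $\Theta=[-a,a]$, $f\equiv 0$, the same comparison estimator $\dr\equiv 0$, and the same $a\downarrow 0$ limit. Your asymptotic expansion $\E_{ta}(\widehat\theta-ta)^2=a^2(1+t^2)+O(a^3)$ is exactly the leading term of the paper's non-asymptotic bound $\E_\theta(\widehat\theta-\theta)^2\ge 2(1-\Phi(2a))(\theta^2+a^2)$.

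Your lower bound sketch, however, has a genuine gap. The claim that ``the numerator is bounded below by the posterior-mean Bayes risk (a second-moment-of-the-posterior computation leveraging that $\pi$ is a 1-Lipschitz push-forward of a Gaussian)'' is where the argument fails to close. Being a $1$-Lipschitz pushforward of a Gaussian imposes no lower bound on spread: the proximal map can collapse $\theta_0+\sigma Z$ onto a set of arbitrarily small diameter (think of $f$ with a very steep minimum, or $\Theta$ nearly a point), making the Bayes risk under $\pi$ as small as one likes while the integrated risk of $\widehat\theta$ need not vanish at the same rate. The paper confronts exactly this difficulty and resolves it with three ingredients you do not have: (i) the deterministic scale $t_\theta$ (the maximizer of $G_\theta$ in \eqref{gmd}), around which $\|\widehat\theta-\theta\|_2$ concentrates by Theorem~\ref{erc}; (ii) a case split on $\inf_\theta t_\theta$, with a two-point Le Cam argument when this infimum is small; and (iii) in the hard case, a prior that is \emph{not} the prox map but the law of $\Psi(Z)$, where $\Psi(z)$ maximizes $\langle z,\alpha-\theta^*\rangle - f(\alpha)$ over the \emph{localized} set $U(\theta^*)=\Theta\cap\{\|\alpha-\theta^*\|_2\le \rho t_{\theta^*}\}$, with $\theta^*$ chosen to (nearly) maximize $m_\theta(\rho t_\theta)$. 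The Bayes risk is then lower-bounded via the small-ball inequality \eqref{eq:intro_I_bayes_chi} from \cite{GuntuBayLow}, and the small-ball probabilities $w\{\|\theta-a\|_2^2\le t\}$ are controlled using the specific structure of $\Psi$ together with concavity of $m_a$ and inequality \eqref{inc}. None of this machinery is reproduced by ``$1$-Lipschitz pushforward plus Gaussian concentration.''

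Finally, the specific constant $6.05\times 10^{-6}$ arises from the paper's numerical optimization over the parameters $\rho=0.0295$, $\beta=0.42$, $b=51.53$ that balance the easy/hard cases and the small-ball bound; it is not credible that an optimization over your different parameters $(\sigma,\theta_0)$ would reproduce this exact value. That coincidence is a sign that the lower-bound sketch is back-filling the answer rather than deriving it.
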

The lower bound of $6.05 \times 10^{-6}$ for $C^*$ comes from our
argument for the proof of Theorem \ref{shiv}. It must be noted here
that \citet{chatterjee2014new} does not provide any explicit values
for $C$ in his $C$-admissibility result. Even if the constant $C$ were
tracked down in the proof of \cite{chatterjee2014new}, it appears that
it will be smaller than $6.05 \times 10^{-6}$ by several orders of
magnitude. The improvement of the lower bound also shows the
  advantage of using our new arguments in the proof of admissibility. 

The upper bound of $1/2$ for $C^*$ is a consequence of an explicit
construction of $\Theta$ and $f$ such that $\widehat{\theta}(X; \Theta,
f)$ is uniformly dominated over $\Theta$ by a factor of $2$ by another
estimator. We believe that this example is non-trivial. Please see
Section \ref{eg} for the proof of Theorem \ref{cista}.

The determination of the exact value of the constant $C^*$ is likely
to be a very challenging problem, which is left to be a future work.


\subsection{Proof sketch}
As a summary, the contributions of the paper include: (1) a novel and
intuitive proof of a generalization of a result of \cite{chatterjee2014new}
on $C$-admissibility in Theorem \ref{shiv}; (2) explicit bounds
for the worst possible value of the normalized minimax risk $C^*$ in
Theorem \ref{cista}. In this subsection, we provide an outline of 
our proofs of these results.

Admissibility results are almost always proved via Bayesian arguments
involving priors. Analogous to the notion of 
$C$-admissibility, we can define a notion of $C$-Bayes as follows. For
$C > 0$ and a proper prior $w$ over $\Theta$, we say that an estimator
$\dr(X)$ is $C$-Bayes with respect to $w$ if
\begin{equation}
  \label{cba}
  C \int_{\Theta} \E_{\theta} \|\dr(X) - \theta\|_2^2 w(d\theta) \leq
  \Rba(w) := \inf_{\tilde{\dr}} \int_{\Theta} \E_{\theta}
  \|\tilde{\dr}(X) - \theta\|_2^2 w(d \theta)
\end{equation}
where $\Rba(w)$ is the Bayes risk with respect to $w$ and the infimum
in the definition of $\Rba(w)$ above is over all estimators
$\tilde{\dr}$.   

It is now trivial to see that an estimator $\dr(X)$ is $C$-admissible
over $\Theta$ if it is $C$-Bayes for some proper prior $w$ supported
on $\Theta$. As a result, in order to prove
that $\widehat{\theta}(X; \Theta, f)$ is $C$-admissible over $\Theta$,
it is
sufficient to construct a proper prior $w$ on $\Theta$ such that
$\widehat{\theta}(X; \Theta, f)$ is $C$-Bayes with respect to $w$. We
construct such a prior $w$ by modifying the construction of
\citet{chatterjee2014new}  (which only applied to the LSE)
appropriately. See the proof of Theorem \ref{shiv} for the description
of $w$.

For our prior $w$, in order to prove that $\widehat{\theta}(X; \Theta,
f)$ is $C$-Bayes, we need to
\begin{enumerate}
\item bound $\int_{\Theta} \E_{\theta} \|\widehat{\theta}(X; \Theta,
  f) - \theta\|_2^2 w(d \theta)$ from above, and
\item bound $\Rba(w)$ from below
\end{enumerate}
and make sure that the two bounds differ only by the multiplicative
factor $C$. For the first step above, we need to study the risk
properties of the estimator $\widehat{\theta}(X, \Theta, f)$. This is
done in Section \ref{rsk}. For the second step, we apply a recent
general Bayes risk lower bound from \citet{GuntuBayLow}. The
application of this risk bound shortens the proof considerably. In
contrast, \citet{chatterjee2014new} used a bare hands for lower
bounding the Bayes risk via ``a sequence of relatively complicated
technical steps involving concentration inequalities and second moment
lower bounds''.

For proving Theorem \ref{cista}, we first observe that our proof of
Theorem \ref{shiv} also yields the lower bound of $6.05 \times 10^{-6}$ for the
constant $C^*$. For proving that $C^* \leq 1/2$, we explicitly
construct a convex set $\Theta$ over which the normalized minimax risk
is arbitrarily close to $1/2$ (see Section \ref{eg} for details).

The rest of this paper is structured as follows. In Section \ref{rsk},
we describe some results on the risk of the estimator $\widehat{\theta}(X;
  \Theta, f)$. These can be seen as an extension of the results of
\citet{chatterjee2014new} for penalized estimators. We will also
  discuss the connection of our risk bounds to a recent work by
  \citet{van2015concentration}.
Section \ref{mp} contains the proof of Theorem \ref{shiv} while Section
\ref{eg} contains the proof of Theorem \ref{cista}. Section \ref{prr}
contains the proofs for the risk results of penalized estimators from
Section \ref{rsk}.

\section{Risk Behavior of $\widehat{\theta}(X; \Theta, f)$}\label{rsk}
Throughout this section, we fix a closed convex set $\Theta$ in $\R^n$
and a real-valued convex function $f$ on $\Theta$. The data vector $X$
will be generated according to the normal distribution with mean
$\theta$ and identity covariance. We study the risk of the estimator
$\widehat{\theta}(X; \Theta, f)$. The main risk result is Theorem
\ref{erc} below which will be used in the proof of Theorem \ref{shiv}
to bound the quantity
\begin{equation*}
  \int_{\Theta} \E_{\theta} \|\hat{\theta}(X; \Theta, f) -
  \theta\|_2^2 w(d\theta)
\end{equation*}
for a suitable prior $w$.

The basic fact about the estimator $\widehat{\theta}(X; \Theta, f)$
(proved in Theorem \ref{erc} below) is that the loss
$\|\widehat{\theta}(X; \Theta, f) - \theta\|_2$ is concentrated around
a deterministic quantity $t_{\theta}$ which depends on $\theta$, the
constraint set $\Theta$ and the regularizer $f$. The quantity
$t_{\theta}$ is defined as the maximizer of the function $G_{\theta} :
[0, \infty) \rightarrow \R$ over $[0, \infty)$ where
\begin{equation}\label{gmd}
  G_{\theta}(t) := m_{\theta}(t) - \frac{t^2}{2} \qt{with
    $m_{\theta}(t) := \E_{\theta} \left(\sup_{\alpha \in \Theta : \|\alpha -
      \theta\|_2 \leq t}  \left\{\left<X - \theta, \alpha - \theta \right> -
    f(\alpha) \right\} \right)$}.
\end{equation}
The quantity $m_{\theta}(t)$ can be viewed as an extension of notion
of (localized) Gaussian width with the penalty function $f(\alpha)$ (note that $X
- \theta$ is a standard Gaussian random variable). Indeed, when 
$f \equiv 0$, $m_{\theta}(t)$ is the Gaussian width of the set
$\{\alpha-\theta: \alpha \in \Theta,  \|\alpha - \theta\|_2 \leq t\}$.  
The existence of $t_{\theta}$ as a unique maximizer of $G_{\theta}(t)$
over $t \in [0, \infty)$ is proved in Lemma \ref{ttm} (see the end of
this section). We also note that $t_{\theta}$ depends on the
  choice of the penalty $f$. 

\begin{theorem}\label{erc}
Fix $\theta \in \Theta$ and consider the estimator
$\widehat{\theta}(X; \Theta, f)$ constructed from $X$ generated
according to the model $X \sim N(\theta, I_n)$. Then
\begin{equation} \label{erc.3}
\P \left\{ \|\widehat{\theta}(X; \Theta, f) - \theta \|_2  \geq
  t_{\theta} + \delta \right\} \leq 2 \exp \left(-
  \frac{\delta^4}{32(t_{\theta} +   \delta)^2} \right)
\end{equation}
for every $\delta \geq 0$ and
  \begin{equation}\label{1co}
    \E_{\theta} \|\widehat{\theta}(X; \Theta, f) - \theta \|_2^2 \leq
    t^2_{\theta} + \left(2 \sqrt{84} \right) t_{\theta} \min(\sqrt{t_{\theta}}, 1) +
    84 \min(t_{\theta}, 1).
  \end{equation}
\end{theorem}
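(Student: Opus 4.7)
The proof will hinge on the Chatterjee-style observation that the loss itself is the argmax of a one-dimensional random concave function, together with Gaussian concentration for Lipschitz functionals, suitably modified to accommodate the penalty $f$.

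First, I would rewrite the variational characterization of $\widehat{\theta}(X;\Theta,f)$. Expanding
\[
\tfrac{1}{2}\|X - \alpha\|^2 + f(\alpha) = \tfrac{1}{2}\|X - \theta\|^2 - \langle X - \theta, \alpha - \theta\rangle + \tfrac{1}{2}\|\alpha - \theta\|^2 + f(\alpha)
\]
shows that $\widehat{\theta}$ equivalently maximizes $\langle X - \theta,\alpha-\theta\rangle - f(\alpha) - \tfrac{1}{2}\|\alpha-\theta\|_2^2$ over $\alpha\in\Theta$. Setting $\hat t := \|\widehat{\theta}-\theta\|_2$ and
\[
M_X(t) := \sup_{\alpha \in \Theta:\,\|\alpha - \theta\|_2 \leq t}\bigl\{\langle X - \theta,\alpha-\theta\rangle - f(\alpha)\bigr\},\qquad H_X(t) := M_X(t) - \tfrac{t^2}{2},
\]
a short argument (noting that $\widehat{\theta}$ must also be optimal on the sphere $\{\|\alpha-\theta\|_2=\hat t\}$, else perturbing along that sphere would increase the objective) gives $\hat t\in\argmax_{t\ge 0}H_X(t)$. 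Two structural properties of $M_X$ are crucial: (a) $M_X$ is concave in $t$, since convex combinations of feasible $\alpha$'s remain feasible at the interpolated radius and $\langle X-\theta,\cdot\rangle - f(\cdot)$ is concave on $\Theta$ (because $f$ is convex); and (b) $X\mapsto M_X(t)$ is $t$-Lipschitz, as it is a supremum of linear functionals whose gradients $\alpha-\theta$ have norm at most $t$. Consequently $H_X$ is concave in $t$, while by Gaussian concentration $M_X(t)-m_\theta(t)$ is sub-Gaussian with variance proxy $t^2$.

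Next I would couple the two argmaxima. Since $-t^2/2$ is strongly concave with curvature $1$ while $m_\theta$ is concave, $G_\theta$ is strongly concave with curvature $1$ at its maximizer $t_\theta$, so a one-line calculation using the right derivative of $m_\theta$ at $t_\theta$ gives
\[
G_\theta(t_\theta) - G_\theta(t_\theta+\delta) \geq \tfrac{\delta^2}{2},\qquad \delta\ge 0,
\]
which rearranges to $m_\theta(t_\theta+\delta) - m_\theta(t_\theta) \leq t_\theta\delta$. On the event $\{\hat t\ge t_\theta+\delta\}$, concavity of $H_X$ and the placement of its maximizer force $H_X(t_\theta+\delta)\ge H_X(t_\theta)$, i.e.\ $M_X(t_\theta+\delta)-M_X(t_\theta)\ge t_\theta\delta + \delta^2/2$. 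Subtracting the two inequalities yields
\[
\bigl[M_X(t_\theta+\delta)-m_\theta(t_\theta+\delta)\bigr] - \bigl[M_X(t_\theta)-m_\theta(t_\theta)\bigr] \ge \tfrac{\delta^2}{2},
\]
so at least one of the two centered deviations has magnitude $\ge\delta^2/4$. Applying Gaussian concentration at scales $t_\theta+\delta$ and $t_\theta$, observing that $t_\theta\le t_\theta+\delta$ lets the weaker exponent absorb the stronger one, and taking a union bound produces exactly \eqref{erc.3}.

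Finally, for \eqref{1co} I would integrate the tail: writing $\E_\theta\|\widehat{\theta}-\theta\|_2^2 = \int_0^\infty 2u\,\P(\hat t\ge u)\,du$, using $\P(\hat t\ge u)\le 1$ for $u\le t_\theta$, and substituting $\delta = u-t_\theta$ above, one obtains
\[
\E_\theta\|\widehat{\theta}-\theta\|_2^2 \leq t_\theta^2 + 4\int_0^\infty (t_\theta+\delta)\exp\!\Bigl(-\tfrac{\delta^4}{32(t_\theta+\delta)^2}\Bigr)d\delta.
\]
The remaining integral splits naturally at $\delta = t_\theta$: for $\delta\lesssim t_\theta$ the exponent behaves like $\delta^4/t_\theta^2$, producing a contribution of order $t_\theta\sqrt{t_\theta}\wedge t_\theta$ depending on whether $t_\theta\le 1$ or $t_\theta\ge 1$, while for $\delta\gtrsim t_\theta$ the exponent behaves like $\delta^2$, yielding a dimension-free constant. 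Tracking the numerical constants carefully in these two regimes — the only genuinely technical step — yields the explicit $2\sqrt{84}$ and $84$, and the $\min(\sqrt{t_\theta},1)$ and $\min(t_\theta,1)$ factors are precisely the artifact of interpolating between the two regimes.
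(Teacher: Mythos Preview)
Your argument for the concentration inequality \eqref{erc.3} is correct and essentially identical to the paper's: both reduce to the observation that $\hat t=\|\widehat\theta-\theta\|_2$ maximizes the random concave function $H_X(t)=M_X(t)-t^2/2$, use the strong-concavity inequality $G_\theta(t_\theta)-G_\theta(t_\theta+\delta)\ge\delta^2/2$, and finish with a union bound over two Gaussian Lipschitz deviations at scales $t_\theta$ and $t_\theta+\delta$.

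For \eqref{1co} the two write-ups are also equivalent in substance, though organized slightly differently. Your tail-integral identity $\E L^2\le t_\theta^2+2t_\theta\E(L-t_\theta)_++\E(L-t_\theta)_+^2$ is exactly the pointwise bound $L^2\le t_\theta^2+2t_\theta(L-t_\theta)_++(L-t_\theta)_+^2$ that the paper uses, just obtained by integration. Where you say ``split at $\delta=t_\theta$'', the paper instead rescales by setting $\delta=x\sqrt{t_\theta}$ when $t_\theta\ge1$ and $\delta=x$ when $t_\theta\le1$; in both regimes this reduces the tail to $2\exp(-x^4/(32(1+x)^2))$, and the constants $84$ and $\sqrt{84}$ then come from the single numerical evaluation $4\int_0^\infty x\exp(-x^4/(32(1+x)^2))\,dx\le84$. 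Your splitting heuristic would give constants of the same order but not obviously these exact values, so if you want to recover $2\sqrt{84}$ and $84$ precisely you should adopt the paper's rescaling rather than the $\delta\lessgtr t_\theta$ split.
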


When $f \equiv 0$ i.e., when the estimator $\widehat{\theta}(X;
\Theta, f)$ becomes the LSE over $\Theta$, then the above result has
been proved by \citet[Theorem 1.1]{chatterjee2014new}. Therefore,
Theorem \ref{erc} can be seen as an extension of \citet[Theorem
1.1]{chatterjee2014new} to penalized
estimators. \citet{muro2015concentration} also studied concentration
for penalized estimators; however their result (see \citet[Theorem
1]{muro2015concentration}) proves concentration for (in our notation)
the quantity
\begin{equation*}
\tau(\widehat{\theta}(X; \Theta, f)) := \sqrt{\|\widehat{\theta}(X;
  \Theta, f) - \theta\|_2^2 + 2 f(\widehat{\theta}(X; \Theta, f))}.
\end{equation*}
More recently, \citet{van2015concentration} studied
concentration of the loss of empirical risk minimization estimators in
a very general setting. Two of their results are relevant to Theorem
\ref{erc}. In \citet[Theorem 2.1]{van2015concentration}, it is proved that
$\|\widehat{\theta}(X;   \Theta, f) - \theta \|_2$ concentrates around
its expectation $\E_{\theta} \|\widehat{\theta}(X;   \Theta, f) - \theta \|_2$
at a rate that is faster than that given by Theorem
\ref{erc}. However to prove our admissibility result, we require
concentration of $\|\widehat{\theta}(X;   \Theta, f) - \theta \|_2$
around $t_{\theta}$ and not around $\E_{\theta}\|\widehat{\theta}(X;
\Theta, f) - \theta \|_2$. The relation between $t_{\theta}$ and
$\E_{\theta}\|\widehat{\theta}(X; \Theta, f) - \theta \|_2$ is not
completely clear. Another result from \citet{van2015concentration}
that is relevant to us is their Theorem 
4.1. However it also gives concentration for the quantity
$\tau(\widehat{\theta}(X; \Theta, f))$ while we require concentration
for $\|\widehat{\theta}(X;   \Theta, f) - \theta\|_2$. It is also worthwhile to note that
\citet{van2015concentration} also studied concentration in models more
general than Gaussian sequence models.



In addition to Theorem \ref{erc}, we shall require some additional
facts about $t_{\theta}$ and the function $m_{\theta}$. These are
summarized in the following result which also includes a statement on
the existence and uniqueness of $t_{\theta}$. For the case of the LSE
(i.e., when $f \equiv 0$), the facts stated in the lemma below are
observed in \citet{chatterjee2014new} and most of the results in the
following lemma are straightforward extensions of the corresponding
facts in \citet{chatterjee2014new}.

\begin{lemma}\label{ttm}
  Recall the functions $G_{\theta}(\cdot)$
  and $m_{\theta}(\cdot)$ from \eqref{gmd}.
  \begin{enumerate}
  \item For every $\theta \in \Theta$, the function $m_{\theta}$ is
    non-decreasing and concave.
  \item For every $\theta \in \Theta$, the function
    $G_{\theta}(\cdot)$ has a unique maximizer
    $t_{\theta}$ on $[0, \infty)$.
  \item For every $\theta \in \Theta$ and $t \geq 0$, we have
    \begin{equation}\label{inc}
      m_{\theta}(t) \leq m_{\theta}(t_{\theta}) + t_{\theta}(t -
      t_{\theta}).
    \end{equation}
  \item For every $\theta \in \Theta$ and $t \geq 0$, we have
    \begin{equation}\label{scon}
      G_{\theta}(t) - G_{\theta}(t_{\theta}) \leq \frac{-(t -
      t_{\theta})^2}{2}.
    \end{equation}
  \item The risk function $\theta \mapsto
    \E_{\theta}\|\widehat{\theta}(X; \Theta, f) - \theta\|_2^2$ is
    smooth in the following sense: for every $\theta_1, \theta_2 \in
    \Theta$, we have
    \begin{equation}\label{smor}
      \E_{\theta_1} \|\widehat{\theta}(X; \Theta, f) - \theta_1\|_2^2
      \leq 2 \E_{\theta_2} \|\widehat{\theta}(X; \Theta, f) -
      \theta_2\|_2^2 + 8 \|\theta_1 - \theta_2\|^2.
    \end{equation}
   \item The function $\theta \mapsto t_{\theta}$ is smooth in the
     following sense: for every $\theta_1, \theta_2 \in \Theta$, we
     have
\begin{equation}\label{moo}
  \left(t_{\theta_1} - \sqrt{\|\theta_1 - \theta_2\|_2^2 +
  4 t_{\theta_1} \|\theta_1 - \theta_2\|_2} \right)_+ \leq
t_{\theta_2} \leq t_{\theta_1} + \sqrt{\|\theta_1 - \theta_2\|_2^2 +
  4 t_{\theta_1} \|\theta_1 - \theta_2\|_2}.
\end{equation}
   Also for every $\theta_1, \theta_2 \in \Theta$ and $\rho \geq 0$,
   we have
     \begin{equation}\label{smoo}
  \left(1 - \sqrt{\rho^2 + 4 \rho} \right)_+ t_{\theta_1} \leq
  t_{\theta_2} \leq \left(1 + \sqrt{\rho^2 + 4 \rho} \right)
  t_{\theta_1}
     \end{equation}
provided $\|\theta_1 - \theta_2\|_2 \leq \rho t_{\theta_1}$.
  \end{enumerate}
\end{lemma}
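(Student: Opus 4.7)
The plan is to handle parts (1)--(4) by direct convex-analysis arguments, (5) by firm non-expansiveness of the proximal operator, and (6) by coupling the localized widths $m_{\theta_1}$ and $m_{\theta_2}$ and combining with part (3). I expect (6) to be the most delicate step, since it requires balancing the interplay between $m_{\theta_1}$, $m_{\theta_2}$, and the quadratic term $t^2/2$.

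For (1), I would fix $X$ and show that the inner value function $v(t;X) := \sup\{\langle X-\theta,\alpha-\theta\rangle - f(\alpha):\alpha\in\Theta,\ \|\alpha-\theta\|_2\le t\}$ is non-decreasing (the feasible set grows with $t$) and concave in $t$ (a convex combination of feasible points at levels $t_1,t_2$ is feasible at $\lambda t_1+(1-\lambda)t_2$, and the objective is concave in $\alpha$); expectation preserves both properties. For (2), $G_\theta=m_\theta-t^2/2$ is strictly concave, and Cauchy--Schwarz on the linear part together with a subgradient bound $f(\alpha)\ge f(\theta)+\langle g,\alpha-\theta\rangle$ shows $m_\theta(t)=O(t)$, so $G_\theta(t)\to-\infty$ and the unique maximizer $t_\theta$ exists. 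For (3), the first-order condition at the maximizer of the concave $G_\theta$ identifies $t_\theta$ as a supergradient of $m_\theta$ at $t_\theta$, which is exactly \eqref{inc} (the degenerate case $t_\theta=0$ forces $m_\theta$ to be constant on $[0,\infty)$, and \eqref{inc} still holds). Then \eqref{scon} is immediate algebra from \eqref{inc}:
\[
G_\theta(t)-G_\theta(t_\theta) \;\le\; t_\theta(t-t_\theta) - \tfrac12(t^2-t_\theta^2) \;=\; -\tfrac12(t-t_\theta)^2.
\]

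For (5), I would use that $X\mapsto\widehat{\theta}(X;\Theta,f)$ is the proximal operator of the closed convex function $f+\mathbb{I}_\Theta$ and hence $1$-Lipschitz in $X$. Coupling the two experiments via $X=Z+\theta_i$ with $Z\sim N(0,I_n)$ gives $\|\widehat{\theta}(Z+\theta_1;\Theta,f)-\widehat{\theta}(Z+\theta_2;\Theta,f)\|_2\le\|\theta_1-\theta_2\|_2$, so by the triangle inequality $\|\widehat{\theta}(Z+\theta_1)-\theta_1\|_2\le\|\widehat{\theta}(Z+\theta_2)-\theta_2\|_2+2\|\theta_1-\theta_2\|_2$; squaring via $(a+b)^2\le 2a^2+2b^2$ and taking expectation in $Z$ delivers \eqref{smor}.

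For (6), the key is the coupling inequality $m_{\theta_1}(t)\le m_{\theta_2}(t+d)$ with $d:=\|\theta_1-\theta_2\|_2$, obtained by changing variables to $Z=X-\theta_1\sim N(0,I_n)$, observing that every $\alpha\in\Theta$ feasible for $m_{\theta_1}(t)$ is feasible for $m_{\theta_2}(t+d)$, and using $\langle Z,\alpha-\theta_1\rangle=\langle Z,\alpha-\theta_2\rangle+\langle Z,\theta_2-\theta_1\rangle$ together with $\E Z=0$ to kill the cross term; the reverse inequality $m_{\theta_2}(t)\le m_{\theta_1}(t+d)$ holds by symmetry. I would then combine $G_{\theta_2}(t_{\theta_2}) \ge G_{\theta_2}(t_{\theta_1}+d) \ge m_{\theta_1}(t_{\theta_1}) - (t_{\theta_1}+d)^2/2$ with the upper bound $G_{\theta_2}(t_{\theta_2}) \le m_{\theta_1}(t_{\theta_2}+d) - t_{\theta_2}^2/2 \le m_{\theta_1}(t_{\theta_1}) + t_{\theta_1}(t_{\theta_2}+d-t_{\theta_1}) - t_{\theta_2}^2/2$, where the last step uses \eqref{inc}. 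The terms $m_{\theta_1}(t_{\theta_1})$ cancel, and rearrangement produces $(t_{\theta_2}-t_{\theta_1})^2 \le d^2+4d\, t_{\theta_1}$, giving both halves of \eqref{moo} simultaneously. Under the additional hypothesis $d\le\rho\, t_{\theta_1}$ of \eqref{smoo}, the right-hand side is at most $t_{\theta_1}^2(\rho^2+4\rho)$, and factoring $t_{\theta_1}$ out of the square root yields \eqref{smoo}.
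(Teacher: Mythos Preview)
Your proposal is correct and follows essentially the same approach as the paper: the same convexity arguments for (1)--(4), the same $1$-Lipschitz proximal coupling for (5), and the same coupling inequality $m_{\theta_1}(t)\le m_{\theta_2}(t+d)$ combined with \eqref{inc} for (6). Your organization of (6) is in fact slightly more direct than the paper's, since you combine upper and lower bounds on $G_{\theta_2}(t_{\theta_2})$ to obtain $(t_{\theta_2}-t_{\theta_1})^2\le d^2+4d\,t_{\theta_1}$ in one step, whereas the paper first derives a lower bound on $G_{\theta_2}(t_{\theta_1}+d)-G_{\theta_2}(t)$ valid for all $t$ and then invokes strict concavity of $G_{\theta_2}$ to sandwich $t_{\theta_2}$.
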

With  Theorem \ref{erc} and Lemma \ref{ttm} in place, we first provide
the proof of our main result (Theorem \ref{shiv}) in the next
section. The proofs for Theorem \ref{erc} and Lemma \ref{ttm} will be
relegated to the end of the paper in Section \ref{prr}.

\section{Proof of Theorem \ref{shiv}} \label{mp}
We follow the program outlined in the introduction. For proving that
$\widehat{\theta}(X; \Theta, f)$ is $C$-admissible for a constant $C$,
it is enough to demonstrate the existence of a prior $w$ on $\Theta$
such that $\widehat{\theta}(X; \Theta, f)$ is $C$-Bayes with respect
to $w$. As described in the introduction, a key step for proving that
$\widehat{\theta}(X; \Theta, f)$ is $C$-Bayes involves bounding from
below the Bayes risk $\Rba(w)$ with respect to $w$. For this purpose,
we shall use the following result from \citet[Corollary
4.4]{GuntuBayLow}. This result states that the following inequality
holds for every prior $w$ on $\Theta$:   
\begin{equation}\label{eq:intro_I_bayes_chi}
  \Rba(w) \geq \frac{1}{2} \sup \left\{t > 0 : \sup_{a \in \Theta} w
   \{\theta \in \Theta : \|\theta - a\|_2^2 \leq t \} < \frac{1}{4(1 +
      I)} \right\}
\end{equation}
where $I$ is any nonnegative number satisfying
\begin{equation}\label{chia}
  I \geq \inf_{Q} \int_{\Theta} \chi^2(P_{\theta} \| Q) d w(\theta).
\end{equation}
Here $P_{\theta}$ denotes the $n$-dimensional normal distribution with
mean $\theta$ and identity covariance and the infimum in \eqref{chia}
is over all probability measures $Q$ on $\R^n$. Also $\chi^2(P \| Q)$
denotes the chi-square divergence defined as $\int (p^2/q) d\mu - 1$
where $p$ and $q$ are densities of $P$ and $Q$ respectively with
respect to a common dominating measure $\mu$. 

We are now ready to prove Theorem \ref{shiv}.

\begin{proof}[Proof of Theorem \ref{shiv}]
We break the proof into two separate cases: the case when
$\inf_{\theta \in \Theta}  t_{\theta} $ is strictly smaller than some
constant $b$ and the case when $\inf_{\theta \in \Theta} t_{\theta}$
is larger than $b$. The first case is the easy case where we show that
$\widehat{\theta}(X; \Theta, f)$ is $C$-Bayes with respect to a simple
two-point prior via Le Cam's classical two-point testing
inequality. The second case is harder where we use a more
elaborate prior $w$ together with inequality
\eqref{eq:intro_I_bayes_chi} to lower bound $\Rba(w)$.

\textbf{Easy Case:} Here we assume that $\inf_{\theta \in \Theta}
t_{\theta} \leq b$ (the precise value of the constant $b$ will be
specified later). Choose $\theta^* \in  \Theta$ such that
$t_{\theta^*} \leq b$ (note that $\theta \mapsto t_{\theta}$ is
continuous from \eqref{moo} and that $\Theta$ is closed so that such a
$\theta^*$ exists). Let $\theta_1 \in \Theta$ be any maximizer of
$\|\theta^* - \theta\|_2$ as $\theta$ varies over $\{\theta \in
\Theta: \|\theta - \theta^*\|_2 \leq 1\}$. Let $w$ be the uniform
prior over the two-point set $\{\theta^*, \theta_1\}$. The Bayes risk
with respect to $w$ can be easily bounded by below by Le Cam's
inequality (from \citet{LeCam:73AnnStat}) which gives
\begin{equation*}
  R_{\rm Bayes}(w) \geq \frac{1}{4} \|\theta^* - \theta_1\|_2^2
  \left(1 - \|P_{\theta^*} - P_{\theta_1}\|_{TV} \right)
\end{equation*}
where $\|P_{\theta^*} - P_{\theta_1}\|_{TV}$ denotes the total
variation distance between the probability measures $P_{\theta^*}$ and
$P_{\theta_1}$. Pinsker's inequality (see for example \cite[Lemma
2.5]{Tsybakovbook}) now implies 
\begin{equation*}
  2 \|P_{\theta^*} - P_{\theta_1} \|^2_{TV} \leq D(P_{\theta^*} \|
  P_{\theta_1}) = \frac{1}{2} \|\theta^* - \theta_1\|_2^2 \leq
  \frac{1}{2}
\end{equation*}
and hence
\begin{equation}\label{rba}
  R_{\rm Bayes}(w) \geq \frac{1}{8} \|\theta^* - \theta_1\|_2^2.
\end{equation}
By the definition of $\theta_1$, we have $\|\theta_1-\theta^*\|_2 \leq
1$. We consider the following two cases by the value of
$\|\theta_1-\theta^*\|_2$.
\begin{enumerate}
\item $\|\theta^* - \theta_1\|_2 = 1$: Here inequality \eqref{rba} gives
  $R_{\rm Bayes}(w) \geq 1/8$. Further, by the assumption $t_{\theta^*}
  \leq b$ and inequality \eqref{1co}, we have
  \begin{equation*}
    \E_{\theta^*}
  \|\widehat{\theta}(X) - \theta^*\|_2^2 \leq b^2 + (2 \sqrt{84})
  b^{3/2} + 84 b
  \end{equation*}
Moreover, by inequality \eqref{smor} and \eqref{1co},   we have
  \begin{align*}
\E_{\theta_1} \|\widehat{\theta}(X; \Theta, f) - \theta_1\|_2^2 &\leq 2
\E_{\theta^*} \|\widehat{\theta}(X; \Theta, f)   - \theta^*\|_2^2 + 8
\|\theta^* - \theta_1\|_2^2 \\
&\leq 2 \left(b^2 + (2 \sqrt{84})
  b^{3/2} + 84 b \right) + 8 \|\theta^* - \theta_1\|_2^2 \\
&\leq   2 \left(b^2 + (2 \sqrt{84})
  b^{3/2} + 84 b \right) + 8.
  \end{align*}
Combining the above two inequalities, we deduce that
  \begin{equation*}
\int_{\Theta} \E_{\theta} \|\widehat{\theta}(X; \Theta, f) -
\theta\|_2^2 dw(\theta) \leq \frac{3}{2} \left(b^2 + (2 \sqrt{84})
  b^{3/2} + 84 b \right) + 4.
  \end{equation*}
This inequality together with $R_{\rm Bayes}(w) \geq 1/8$ allow us to obtain
  \begin{equation*}
   \frac{1}{12 \left(b^2 + (2 \sqrt{84})
  b^{3/2} + 84 b \right) + 32} \int_{\Theta} \E_{\theta}
    \|\widehat{\theta}(X; \Theta, f) - \theta\|_2^2 dw(\theta) \leq
    R_{\rm Bayes}(w).
  \end{equation*}
  This means that $\widehat{\theta}(X; \Theta, f)$ is $C$-Bayes with
  respect to $w$ with
  \begin{equation}\label{amj}
    C := \frac{1}{12 \left(b^2 + (2 \sqrt{84}) b^{3/2} + 84 b \right)
      + 32}.
  \end{equation}
\item $\|\theta^* - \theta_1\|_2 < 1$:  In this case, $\gamma :=
  \text{diam}(\Theta) \leq 2$ and $\|\theta^* - \theta_1\|_2 \geq
  \gamma/2$. Inequality \eqref{rba} then gives $R_{\rm Bayes}(w) \geq
  \gamma^2/32$. Also  for every $\theta \in \Theta$, we have
  $\E_{\theta} \|\widehat{\theta}(X; \Theta, f) - \theta\|_2^2  \leq
  \gamma^2$ (because both $\widehat{\theta}(X; \Theta, f)$ and
  $\theta$ are constrained to take values in $\Theta$ whose diameter
  is at most $\gamma$). These two inequalities imply that
  \begin{equation*}
    \frac{1}{32} \int_{\Theta} \E_{\theta} \|\widehat{\theta}(X;
    \Theta, f) - \theta\|_2^2 dw(\theta) \leq  R_{\rm Bayes}(w)
  \end{equation*}
  which means that $\widehat{\theta}(X; \Theta, f)$ is $C$-Bayes with
  respect to $w$ with $C = 1/32$.
\end{enumerate}
Therefore in this easy case, we have proved that $\widehat{\theta}(X;
\Theta, f)$ is $C$-Bayes for some $C$ that is atleast the minimum of
\eqref{amj} and $1/32$.

\textbf{Hard Case:} We now work with the situation when $\inf_{\theta
  \in \Theta} t_{\theta} > b$. We fix a specific $\theta^* \in
\Theta$ and choose $w$ as a  specific prior that is supported on the
set
\begin{equation}\label{uri}
U(\theta^*): = \Theta \cap \{\theta \in \R^n : \|\theta - \theta^*\|_2
\leq \rho  t_{\theta^*}\}
\end{equation}
for some constant $\rho > 0$ (to be specified later) which satisfies
$\rho^2 + 4 \rho < 1$. More precisely, for a fixed small constant
$\eta$, let $\theta^*$ be chosen so that
\begin{equation}\label{cho}
  m_{\theta^*}(\rho t_{\theta^*}) \geq \sup_{\theta \in \Theta}
  m_{\theta}(\rho t_{\theta}) - \eta
\end{equation}
where $m_{\theta}(\cdot)$ is defined in \eqref{gmd}. Let $\Psi : \R^n
\mapsto \Theta$ be any measurable mapping such that $\Psi(z)$ is a
maximizer of $\left< z, \alpha - \theta^* \right> - f(\alpha)$  as
$\alpha$ varies in $U(\theta^*)$. Let $w$ be the prior given by the
distribution of $\Psi(Z)$ for a standard Gaussian vector $Z$ in
$\R^n$.

Now because of inequalities \eqref{1co} and \eqref{smor},  we can
write the following for every $\theta \in U(\theta^*)$:
\begin{align}
  \E_{\theta} \|\widehat{\theta}(X; \Theta, f) - \theta\|_2^2 &\leq 2
  \E_{\theta^*} \|\widehat{\theta}(X; \Theta, f) - \theta^*\|_2^2 + 8
  \|\theta - \theta^*\|_2^2 \nonumber \\
&\leq 2 \left(t_{\theta^*}^2 + (2 \sqrt{84}) t_{\theta^*}^{3/2} + 84
  t_{\theta^*} \right)   + 8 \|\theta^* - \theta\|_2^2 \nonumber \\
&\leq 2 \left(t_{\theta^*}^2 + (2 \sqrt{84}) t_{\theta^*}^{3/2} + 84
  t_{\theta^*} \right)   + 8 \rho^2 t^2_{\theta^*} \nonumber \\
&= (2 + 8 \rho^2) t^2_{\theta^*} + (4 \sqrt{84}) t_{\theta^*}^{3/2} +
168 t_{\theta^*}.  \nonumber
\end{align}
This clearly implies
\begin{align}
  \int_{\Theta} \E_{\theta} \|\widehat{\theta}(X; \Theta, f) -
  \theta\|_2^2 d w(\theta) &\leq (2 + 8 \rho^2) t^2_{\theta^*} + (4
  \sqrt{84}) t_{\theta^*}^{3/2} + 168 t_{\theta^*} \nonumber \\
&\leq t_{\theta^*}^2 \left(2 + 8 \rho^2 + 4 \sqrt{84} b^{-1/2} + 168
  b^{-1} \right) \label{sa2}
\end{align}
where the second inequality above follows from the fact that
$t_{\theta^*} \geq \inf_{\theta \in \Theta} t_{\theta} > b$.

The goal now is to provide a lower bound for $\Rba(w)$. We shall use
inequality \eqref{eq:intro_I_bayes_chi} for this purpose. Because the
prior $w$ is concentrated on the convex set $U(\theta^*)$, we can
replace the supremum over $a \in \Theta$ in
\eqref{eq:intro_I_bayes_chi} by the supremum over $a \in
U(\theta^*)$. This gives the following lower bound for $\Rba(w)$:
\begin{equation}\label{wf}
  \Rba(w) \geq \frac{1}{2} \sup \left\{t > 0 : \sup_{a \in U(\theta^*)} w \{\theta
  \in \Theta:  \|\theta - a\|_2^2  \leq t\} < \frac{1}{4(1 + I)}
\right\}
\end{equation}
where $I$ is any upper bound on $\inf_{Q} \int_{\Theta}
\chi^2(P_{\theta} \| Q) d w(\theta)$ . Here $P_{\theta}$ is the
$n$-dimensional normal distribution with mean zero and identity
covariance and the infimum is over all probability measures $Q$.

To obtain a suitable value for $I$, we use
\begin{align*}
  \inf_{Q} \int_{\Theta} \chi^2(P_{\theta} \| Q) d w(\theta)  &\leq
  \int_{\Theta} \chi^2(P_{\theta} \| P_{\theta^*}) d w(\theta) \\
&= \int_{U(\theta^*)} \chi^2(P_{\theta} \| P_{\theta^*}) d w(\theta) \\
&\leq \sup_{\theta \in U(\theta^*)} \chi^2(P_{\theta} \| P_{\theta^*})
\leq \exp(\rho^2 t^2_{\theta^*}) - 1
\end{align*}
where, in the last inequality, we used the expression
$\chi^2(P_{\theta} \| P_{\theta^*}) = \exp(\|\theta -
\theta^*\|_2^2)-1$ and the fact that $\|\theta - \theta^*\|_2 \leq
\rho t_{\theta^*}$ for all $\theta \in U(\theta^*)$. We can therefore
take $1 + I$ to be $\exp(\rho^2 t^2_{\theta^*})$ in \eqref{wf} which
gives
\begin{equation}\label{neal}
\Rba(w) \geq \frac{1}{2} \sup \left\{t > 0 : \sup_{a \in U(\theta^*)}
  w \{\theta \in \Theta:  \|\theta - a\|_2^2 \leq t\} < \frac{1}{4}
  \exp(- \rho^2 t^2_{\theta^*}) \right\}.
\end{equation}
We shall now bound from above
\begin{equation}\label{ttd}
  \sup_{a \in U(\theta^*)} w \{\theta \in \Theta:  \|\theta - a\|_2^2
  \leq t\} \qt{for $\sqrt{t} = (1 - \beta) \rho \left(1 - \sqrt{\rho^2
        + 4 \rho} \right) t_{\theta^*} $}
\end{equation}
for a constant $\beta \in (0, 1)$. The goal is to show that the above
quantity is smaller than $\exp(- \rho^2 t^2_{\theta^*})/4$.

Because $w$ is defined as the distribution of $\Psi(Z)$ which is a
maximizer of $\left<Z, \alpha - \theta^* \right> - f(\alpha)$ over
$\alpha \in  U(\theta^*)$, the inequality
\begin{equation*}
  w(A) \leq \P \left\{\sup_{\alpha \in A} \left( \left<Z, \alpha - \theta^*
    \right> - f(\alpha) \right) \geq \sup_{\alpha \in U(\theta^*)}
  \left( \left<Z, \alpha -
      \theta^* \right> - f(\alpha) \right) \right\}
\end{equation*}
holds for every measureable subset $A$ of $\R^n$. Therefore for every
$a \in U(\theta^*)$, the prior probability $w \{\theta \in \Theta :
\|\theta - a\|^2_2 \leq t\}$ is bounded from above by
\begin{equation*}
  \P \left\{\sup_{\theta \in \Theta : \|\theta - a\|^2_2 \leq t}
   \left( \left<Z, \theta - \theta^* \right> - f(\theta) \right)
   \geq \sup_{\theta \in \Theta : \|\theta - \theta^*\| \leq \rho
     t_{\theta^*}} \left( \left<Z, \theta -
      \theta^* \right> - f(\theta) \right) \right\} .
\end{equation*}
The above probability can be exactly written as $\P \{M_2 + M_3 \geq
M_1\}$ where
\begin{equation*}
  M_1 := \sup_{\theta \in \Theta : \|\theta - \theta^*\| \leq \rho
    t_{\theta^*}} \left( \left<Z, \theta - \theta^* \right> -
    f(\theta) \right), ~~~ M_2 :=
  \sup_{\theta \in \Theta : \|\theta - a\|^2_2 \leq t} \left( \left<Z, \theta
    - a  \right> - f(\theta) \right)
\end{equation*}
and $M_3 := \left<Z, a - \theta^* \right>$. Now if $\gamma \geq 0$ is
such that $\E M_1 - \E M_2 \geq \gamma$, then we can write:
\begin{align*}
  \P \{M_2 + M_3 \geq M_1\} &= \P \left\{M_2 - \E M_2 + M_3 + \E M_1 -
  M_1 \geq \E M_1 - \E M_2 \right\} \\
&\leq \P \left\{M_2 - \E M_2 + M_3 + \E M_1 -
  M_1 \geq \gamma \right\} \\
&\leq \P \left\{M_2 - \E M_2 \geq \frac{\gamma}{3} \right\} + \P
\left\{M_3 \geq \frac{\gamma}{3} \right\} + \P \left\{M_1 - \E M_1
  \leq \frac{-\gamma}{3} \right\} \\
&\leq \exp \left(\frac{-\gamma^2}{18 t} \right) + \exp
\left(\frac{-\gamma^2}{18 \|a - \theta^*\|_2^2} \right) + \exp
\left(\frac{-\gamma^2}{18 \rho^2 t^2_{\theta^*}} \right) .
\end{align*}
where the last inequality follows by standard Gaussian concentration
and the observation that (a) $M_2$, as a function of $Z$, is Lipschitz
with Lipschitz constant $\sqrt{t}$, (b) $M_3$, as a function of $Z$,
is Lipschitz with Lipschitz constant $\|a - \theta^*\|_2$, and (c)
$M_1$, as a function of $Z$, is Lipschitz with Lipschitz constant
$\rho t_{\theta^*}$.

We now use the fact that for every $a \in U(\theta^*)$, the inequality
$\|a - \theta^* \|_2 \leq \rho t_{\theta^*}$ holds to deduce that
\begin{align}
w \left\{\theta \in \Theta : \|\theta -
  a\|_2^2 \le t \right\}  &\leq
  \exp \left(\frac{-\gamma^2}{18 t} \right) + 2 \exp
  \left(\frac{-\gamma^2}{18 \rho^2 t^2_{\theta^*}} \right) \nonumber  \\
&\leq 3 \exp \left(\frac{-\gamma^2}{18 \rho^2 t^2_{\theta^*}} \right) ~~
\text{as}~ t \leq \rho^2 t^2_{\theta^*} \text{(see \eqref{ttd})} \label{koy}
\end{align}
Here $\gamma$ is any nonnegative lower bound on $\E M_1 - \E M_2$. To
obtain a suitable value of $\gamma$, we argue as follows. Observe
first that $\E M_1 = m_{\theta^*}(\rho t_{\theta^*})$ and $\E M_2 =
m_a(\sqrt{t})$ where $m$ is
defined in \eqref{gmd}. Because $\theta^*$ is chosen so that inequality
\eqref{cho} is satisfied, we have $\E M_1 =  m_{\theta^*}(\rho
t_{\theta^*}) \geq m_a(\rho t_a) - \eta$. Thus
\begin{equation*}
  \E M_1 - \E M_2 \geq m_a(\rho t_a) - m_a(\sqrt{t}) - \eta.
\end{equation*}
We now use inequality \eqref{smoo} which states that
\begin{equation}\label{ny}
  t_a \geq \left(1 - \sqrt{\rho^2 + 4 \rho} \right) t_{\theta^*}
  \qt{for every $a \in U(\theta^*)$}.
\end{equation}
Now from the expression for $t$ given in \eqref{ttd} and inequality
\eqref{ny} above, it is clear that $\sqrt{t} \leq \rho t_a$ for every
$a \in U(\theta^*)$. Therefore using concavity of $m_a(\cdot)$ (proved
in Lemma \ref{ttm}) and inequality \eqref{inc}, we deduce
\begin{align*}
  \E M_1 - \E M_2 &\geq m_a(\rho t_a) - m_a(\sqrt{t}) - \eta \\
&\geq m_a(t_a) - m_a(t_a - \rho t_a + \sqrt{t}) - \eta \qt{by concavity
  of $m_a(\cdot)$} \\
&\geq t_a \left(\rho t_a - \sqrt{t} \right) - \eta \qt{by inequality
  \eqref{inc}} \\
&\geq \rho \beta t_{\theta^*}^2 \left(1 - \sqrt{\rho^2 + 4\rho}
\right)^2 - \eta  \qt{by inequality \eqref{ny} and the expression for $t$}.
\end{align*}
We therefore take $\gamma$ to be
\begin{equation*}
  \gamma = \rho \beta t_{\theta^*}^2 \left(1 - \sqrt{\rho^2 + 4 \rho}
  \right)^2 - \eta.
\end{equation*}
Inequality \eqref{koy} then gives
\begin{equation*}
\sup_{a \in U(\theta^*)}  w \left\{ \theta \in \Theta : \|\theta -
  a\|_2^2 \leq t \right\} \leq 3 \exp \left(\frac{-\left\{\rho \beta
    t_{\theta^*} (1 - \sqrt{\rho^2 +  4 \rho})^2 - \eta \right\}^2}{18
    \rho^2 t^2_{\theta^*}} \right) .
\end{equation*}
By a straightforward computation, it can be seen that the right hand
side above is strictly smaller than $\frac{1}{4}\exp(-\rho^2
t_{\theta^*}^2)$ if and only if
\begin{equation}\label{sufi1}
  \frac{1}{18 \rho^2 t_{\theta^*}^2} \left(\rho \beta t_{\theta^*}^2
    \left(1 - \sqrt{\rho^2 + 4 \rho} \right)^2 - \eta \right)^2 - \rho^2
  t_{\theta^*}^2 > \log 12
\end{equation}
Now, as a result of the following inequality (note that we are working
under the condition $\inf_{\theta \in \Theta} t_{\theta} > b$ which
implies that $t_{\theta^*} > b$):
\begin{align*}
  \frac{1}{18 \rho^2 t_{\theta^*}^2} \left(\rho \beta t_{\theta^*}^2
    \left(1 - \sqrt{\rho^2 + 4 \rho} \right)^2 - \eta \right)^2 &\geq
  \frac{1}{18 \rho^2 t_{\theta^*}^2} \left(\rho \beta t_{\theta^*}^2
    \left(1 - \sqrt{\rho^2 + 4 \rho} \right)^2 - \eta
    \frac{t_{\theta^*}^2}{b^2} \right)^2  \\
&= \frac{t^2_{\theta^*}}{18 \rho^2} \left(\rho \beta
    \left(1 - \sqrt{\rho^2 + 4 \rho} \right)^2 - \eta b^{-2}
  \right)^2,
\end{align*}
a sufficient condition for \eqref{sufi1} is
\begin{equation}\label{sufi2}
  t_{\theta^*}^2 > (\log 12) \left(\frac{1}{18 \rho^2}\left(\rho \beta (1 -
      \sqrt{\rho^2 + 4 \rho})^2 - \eta b^{-2} \right)^2 - \rho^2
  \right)^{-1}
\end{equation}
We now make the choices:
\begin{equation}\label{och}
  \rho = 0.0295 ~~~~~~~ \beta = 0.42 ~~~~~~~ \eta = 10^{-20} ~~~~~~~ b = 51.53.
\end{equation}
With these, the right hand side of \eqref{sufi2} can be calculated to
be strictly smaller than $b^2$ so that the condition \eqref{sufi2}
holds because $t_{\theta^*} > b$. Therefore we deduce from inequality
\eqref{neal} that
\begin{equation*}
  \Rba(w) \geq \frac{t}{2} = \frac{\rho^2}{2}(1 - \beta)^2 \left(1 -
    \sqrt{\rho^2 + 4 \rho} \right)^2 t_{\theta^*}^2.
\end{equation*}
Combining the above inequality with \eqref{sa2}, we obtain
\begin{equation*}
\frac{\rho^2 (1 - \beta)^2 (1 - \sqrt{\rho^2 + 4 \rho})^2}{2(2 + 8
  \rho^2 + 4 \sqrt{84} b^{-1/2} + 168 b^{-1})} \int_{\Theta}
\E_{\theta}
\|\widehat{\theta}(X; \Theta, f) - \theta\|_2^2 dw(\theta) \leq
\Rba(w)
\end{equation*}
The constant above (for our choice of $\rho, \beta$ and $b$ in
\eqref{och}) is at least $6.05 \times 10^{-6}$. This means therefore
that $\widehat{\theta}(X; \Theta, f)$ is $C$-Bayes with respect to $w$
with $C \geq 6.05 \times 10^{-6}$ in the case when $\inf_{\theta \in
  \Theta} t_{\theta} > b = 51.53$.

It is also easy to check that for $b = 51.53$, the constant in
\eqref{amj} is also at least $6.05 \times 10^{-6}$. Therefore in every
case, we have proved the existence of a prior $w$ such
that $\widehat{\theta}(X; \Theta, f)$ is $C$-Bayes with respect to $w$
for $C \geq 6.05 \times 10^{-6}$. This means that $\widehat{\theta}(X;
\Theta, f)$ is $C$-admissible for some constant $C \geq 6.05 \times
10^{-6}$. This completes the proof of Theorem \ref{shiv}.
\end{proof}

\section{Proof of Theorem \ref{cista}} \label{eg}
Theorem \ref{shiv} shows that for every $n \geq 1$, $\Theta \in \C_n$
and $f \in \F(\Theta)$, the estimator $\widehat{\theta}(X; \Theta, f)$
is $C$-admissible over $\Theta$ for some $C \geq 6.05 \times
10^{-6}$. This immediately implies that $C^* \geq 6.05 \times
10^{-6}$. We therefore only need to prove that $C^* \leq 1/2$.

For this it is enough to show that for every $\epsilon > 0$, there
exists a closed convex set $\Theta \subseteq
\R$ such that the LSE $\widehat{\theta}(X; \Theta)$ satisfies
\begin{equation}\label{tmo}
  \inf_{\dr} \sup_{\theta \in \Theta} \frac{\E_{\theta} \|\dr(X) -
    \theta\|_2^2}{\E_{\theta} \|\widehat{\theta}(X; \Theta) -
    \theta\|_2^2}  \leq \frac{1}{2} + \epsilon.
\end{equation}
Note that $\widehat{\theta}(X; \Theta)$ is a special case of
$\widehat{\theta}(X; \Theta, f)$ for $f \equiv 0$.  

Let $\Theta := [-a, a]$ for some $a > 0$ (to be specified later). It
is then clear that
$$
\widehat{\theta}(X; \Theta) = \left\{ \begin{array}{rl}
 X &\mbox{ if $-a \leq X \le a$} \\
 a &\mbox{ if $X > a$} \\
-a & \mbox{ if $X < -a$}
       \end{array} \right.
$$
Note now that
\begin{equation*}
  \inf_{\theta \in \Theta} \P_{\theta} \{X > a\} = P_{-a} \{X > a\} = 1 -
  \Phi(2a).
\end{equation*}
and similarly
\begin{equation*}
  \inf_{\theta \in \Theta} \P_{\theta} \{X < -a\} = 1 - \Phi(2a).
\end{equation*}
Therefore for every $\theta \in \Theta = [-a, a]$, we have
\begin{align*}
  \E_{\theta} \|\widehat{\theta}(X; \Theta) - \theta\|_2^2 &\geq
  \E_{\theta} \left( \|\widehat{\theta}(X; \Theta) -
  \theta\|_2^2 \{X > a\} \right) + \E_{\theta} \left( \|\widehat{\theta}(X; \Theta) -
  \theta\|_2^2 \{X < -a\} \right) \\
&= (\theta - a)^2 \P_{\theta} \{X > a\} + (\theta + a)^2 \P_{\theta} \{X
< -a\} \\
&\geq (\theta - a)^2 \P_{-a} \{X > a\} + (\theta + a)^2 \P_{a} \{X
< -a\} \\
&\geq \left(1 - \Phi(2a)\right) \left[(\theta - a)^2 + (\theta + a)^2
\right] = 2 \left(1 - \Phi(2a)\right) \left(\theta^2 + a^2 \right)
\end{align*}
where in the third line above, we used the fact that $\inf_{\theta \in
[-a, a]}\P_{\theta} \{X > a\} = \P_{-a} \{X > a\}$ and $\inf_{\theta \in
[-a, a]}\P_{\theta} \{X < -a\} = \P_{a} \{X > a\}$

As a result, we deduce that
\begin{align*}
\inf_{\dr} \sup_{\theta \in \Theta} \frac{\E_{\theta} \|\dr(X) -
    \theta\|_2^2}{\E_{\theta} \|\widehat{\theta}(X; \Theta) -
    \theta\|_2^2}  &\leq \sup_{\theta \in \Theta}
  \frac{\theta^2}{\E_{\theta} \|\widehat{\theta}(X; \Theta) -
    \theta\|_2^2} \\
 &\leq \frac{1}{2(1 - \Phi(2a))} \sup_{\theta \in
    \Theta} \frac{\theta^2}{\theta^2 + a^2} = \frac{1}{4(1 -
    \Phi(2a))}
\end{align*}
where in the first inequality above, we bounded the infimum over all
estimators $\dr$ by the simple estimator $\dr(X) \equiv 0$.  Note now
that by taking the limit $a \downarrow 0$, the right hand side above
goes to $1/2$. Therefore it is possible to choose $a$ small enough
depending on $\epsilon$ to ensure \eqref{tmo}. This proves $C^* \leq
1/2$ and completes the proof of Theorem \ref{cista}.

\section{Proofs of Theorem \ref{erc} and Lemma \ref{ttm}} \label{prr}
In this section, we shall provide proofs for Theorem \ref{erc} and
Lemma \ref{ttm}. We first give the proof of Lemma \ref{ttm} below
before proceeding to the proof of Theorem \ref{erc}. This is because
parts of Lemma \ref{ttm} will be useful for proving Theorem
\ref{erc}.

\begin{proof}[Proof of Lemma \ref{ttm} ]
  \textbf{1.} Fix $\theta \in \Theta$. We need to prove that
  $m_{\theta}(\cdot)$ is a non-decreasing
  and concave function. It is trivial to see that $m_{\theta}$ is a
  non-decreasing function because the sets $\{\alpha \in \Theta :
  \|\alpha - \theta\|_2 \leq t\}$ are increasing in $t$. To prove
  concavity of $m_{\theta}$, observe that it is enough to show that
  \begin{equation*}
    H_{\theta}(t) := \sup_{\alpha \in \Theta: \|\alpha - \theta\|_2 \leq t}
    \left(\left<X - \theta, \alpha - \theta \right> - f(\alpha) \right)
  \end{equation*}
  is concave for every $z \in \R^n$.  This is because $m_{\theta}(t) =
  \E H_{\theta}(t)$. To prove concavity of $H_{\theta}$, let us fix $0
  \leq t_1 <   t_2 < \infty$, $0 < u< 1$ and $t = (1 - u) t_1 + u
  t_2$. For every $\eta > 0$, by the definition of $H_{\theta}(t)$,
  for each $i
  = 1, 2$, there exists $\theta_i \in \Theta$ with $\|\theta_i -
  \theta\| \leq t_i$
  such that
  \begin{equation*}
    H_{\theta}(t_i) \leq \left<X - \theta, \theta_i - \theta \right> -
    f(\theta_i) + \eta.
  \end{equation*}
Now with $\alpha := (1 -u) \theta_1 + u \theta_2$, it is easy to see
that $\|\alpha - \theta\|_2 \leq t, \alpha \in \Theta$ and, by
convexity of $f$, that
\begin{align*}
  H_{\theta}(t) &\geq  \left<X - \theta, \alpha - \theta \right> - f(\alpha) \\
&\geq (1 - u) \left[ \left<X - \theta, \theta_1 - \theta \right> -
  f(\theta_1)\right]  + u \left[ \left<X - \theta, \theta_2 - \theta \right> -
  f(\theta_2)\right]  \\
&\geq (1 - u) H_{\theta}(t_1) + u H_{\theta}(t_2) - \eta.
\end{align*}
Because $\eta > 0$ is arbitrary, this proves concavity of $H_{\theta}$
on $[0, \infty)$ which implies concavity of $m_{\theta}$ on $[0,
\infty)$.

\textbf{2.} The concavity of $m_{\theta}$ implies that $G_{\theta}(t)
:= m_{\theta}(t) - t^2/2$ is strictly concave on $[0,
\infty)$. Moreover 
\begin{align*}
  m_{\theta}(t) &= \E_{\theta} \left(\sup_{\alpha \in \Theta : \|\alpha -
      \theta\|_2 \leq t}  \left\{\left<X - \theta, \alpha - \theta \right> -
    f(\alpha) \right\} \right) \\
&\leq \E_{\theta} \left(\sup_{\alpha \in \Theta : \|\alpha -
      \theta\|_2 \leq t}  \left<X - \theta, \alpha - \theta \right>
  \right) - \inf_{\alpha \in \Theta} f(\alpha) \\
&\leq \|X - \theta\|_2 t - \inf_{\alpha \in \Theta} f(\alpha)
\end{align*}
where, in the last inequality above, we used the Cauchy-Schwarz
inequality. As a result, $G_{\theta}(t) = m_{\theta}(t) - t^2/2$
converges to $-\infty$ as $t \uparrow +\infty$. This, together with
strict concavity of $G_{\theta}(\cdot)$ on $[0, \infty)$, immediately
imply that $G_{\theta}$ has a unique maximizer $t_{\theta}$ over $[0,
\infty)$.  

\textbf{3.} Fix $\theta \in \Theta$ and $t \ge 0$. Then the inequality
$G_{\theta}(t_{\theta}) \geq G_{\theta}(t)$ holds because $t_{\theta}$
maximizes $G_{\theta}(\cdot)$. This inequality is equivalent to
\begin{equation*}
  m_{\theta}(t) \leq m_{\theta}(t_{\theta}) + \frac{t^2 -
    t^2_{\theta}}{2}.
\end{equation*}
Applying this inequality to $(1 - u) t_{\theta} + u t$ instead of $t$
for a fixed $u \in (0, 1)$, we obtain
\begin{equation*}
  m_{\theta}((1 - u) t_{\theta} + u t) \leq m_{\theta}(t_{\theta}) +
  \frac{(-2u + u^2)t^2_{\theta} + u^2 t^2 + 2u(1 - u) t~t_{\theta}}{2}.
\end{equation*}
Using $m_{\theta}((1 -u) t_{\theta} + u t) \geq (1-
u)m_{\theta}(t_{\theta}) + u~ m_{\theta}(t)$ on the right hand side
above, we get
\begin{equation*}
  u~ m_{\theta}(t) \leq u ~ m_{\theta}(t_{\theta}) +   \frac{(-2u +
    u^2)t^2_{\theta} + u^2 t^2 + 2u(1 - u) t ~t_{\theta}}{2}. .
\end{equation*}
Dividing both sides of the above inequality by $u$ and then letting $u
\rightarrow 0$, we obtain
\begin{equation*}
  m_{\theta}(t_{\theta}) - m_{\theta}(t) \geq t^2_{\theta} - t~ t_{\theta}
\end{equation*}
which proves \eqref{inc}.

\textbf{4.} The expression $G_{\theta}(t) := m_{\theta}(t) - t^2/2$
implies that inequality \eqref{scon} is equivalent to
\eqref{inc}. Therefore, inequality \eqref{scon} follows from the
previous part.

\textbf{5.} To prove \eqref{smor}, we first observe that the map $X
\mapsto \widehat{\theta}(X; \Theta, f)$ is $1$-Lipschitz (for a proof
of this standard fact, see e.g., \citet[Proof of Theorem
2.1]{van2015concentration}). As a result, for every $\theta_1,
\theta_2 \in \Theta$, we can write (for simplicity below, we write
$\widehat{\theta}(X)$ for $\widehat{\theta}(X; \Theta, f)$)
\begin{align*}
  \|\widehat{\theta}(X) - \theta_1\|_2 &\leq
  \|\widehat{\theta}(X) -
  \widehat{\theta}(X-\theta_1+\theta_2)\|_2  +
  \|\widehat{\theta}(X-\theta_1+\theta_2) - \theta_2\|_2 +
  \|\theta_1 - \theta_2 \|_2 \\
&\leq  \|\widehat{\theta}(X-\theta_1+\theta_2) - \theta_2\|_2 +
2  \|\theta_1 - \theta_2 \|_2
\end{align*}
where the first inequality is due to the triangle inequality while the
second inequality is because $\|\widehat{\theta}(X) -
\widehat{\theta}(X - \theta_1 + \theta_2) \|_2 \leq \|\theta_1 -
\theta_2\|_2$ by the $1$-Lipschitz property of $X \mapsto
\widehat{\theta}(X)$. By squaring both sides of the above displayed
inequality and using $(a + b)^2 \leq 2 a^2 + 2 b^2$, we get
\begin{equation*}
  \|\widehat{\theta}(X) - \theta_1\|_2^2 \leq
 2 \|\widehat{\theta}(X-\theta_1+\theta_2) - \theta_2\|^2_2 + 8
  \|\theta_1 - \theta_2 \|^2_2 .
\end{equation*}
Taking expectations with respect to $X \sim N(\theta_1, I_n)$ on both
sides and using the fact that $X - \theta_1 +\theta_2 \sim N(\theta_2,
I_n)$, we obtain the required inequality \eqref{smor}.

\textbf{6. } Fix $\theta_1, \theta_2 \in \Theta$ and observe that, for
every $t \geq 0$, we have
\begin{align*}
  m_{\theta_2}(t) &= \E_{\theta_2} \left(\sup_{\alpha \in \Theta : \|\alpha -
      \theta_2\|_2 \leq t}  \left\{\left<X - \theta_2, \alpha - \theta_2 \right> -
    f(\alpha) \right\} \right) \\
&= \E_{\theta_2} \left(\sup_{\alpha \in \Theta : \|\alpha -
      \theta_2\|_2 \leq t}  \left\{\left<X - \theta_2, \alpha \right> -
    f(\alpha) \right\} \right) \\
&\leq \E_{\theta_2} \left(\sup_{\alpha \in \Theta : \|\alpha -
      \theta_1\|_2 \leq t + \|\theta_1 - \theta_2\|_2}  \left\{\left<X -
        \theta_2, \alpha \right> - f(\alpha) \right\} \right) \\
&= \E_{\theta_1} \left(\sup_{\alpha \in \Theta : \|\alpha -
      \theta_1\|_2 \leq t + \|\theta_1 - \theta_2\|_2}  \left\{\left<X -
        \theta_1, \alpha - \theta_1 \right> - f(\alpha) \right\}
  \right)  = m_{\theta_1}(t + \|\theta_1 - \theta_2\|_2).
\end{align*}
Switching the roles of $\theta_1$ and $\theta_2$, we obtain
\begin{equation*}
  m_{\theta_1}(t) \leq m_{\theta_2}(t + \|\theta_1 - \theta_2\|_2).
\end{equation*}
Combining the above two inequalities, we deduce that
\begin{equation*}
  m_{\theta_1}(t - \|\theta_1 - \theta_2\|_2) \leq m_{\theta_2}(t)
  \leq m_{\theta_1}(t + \|\theta_1 - \theta_2\|_2)
\end{equation*}
provided $\|\theta_1 - \theta_2\|_2 \leq t$. Using $G_{\theta_2}(t) =
m_{\theta_2}(t) - t^2/2$, we further deduce that
\begin{equation*}
  m_{\theta_1}(t - \|\theta_1 - \theta_2\|_2) - \frac{t^2}{2} \leq G_{\theta_2}(t)
  \leq m_{\theta_1}(t + \|\theta_1 - \theta_2\|_2) - \frac{t^2}{2}
\end{equation*}
Therefore for every $t \geq 0$, we have
\begin{align*}
  G_{\theta_2}(t_{\theta_1} + \|\theta_1 - \theta_2\|_2)  -
  G_{\theta_2}(t) &\geq \left\{ m_{\theta_1}(t_{\theta_1}) - \frac{1}{2}
  \left(t_{\theta_1} + \|\theta_1 - \theta_2\|_2 \right)^2 \right\} -
  \left\{m_{\theta_1}(t + \|\theta_1 - \theta_2\|_2) - \frac{t^2}{2}
  \right\} \\
&= \left\{m_{\theta_1}(t_{\theta_1}) - m_{\theta_1}(t + \|\theta_1 -
  \theta_2\|_2) \right\} - \frac{1}{2}\left\{\left(t_{\theta_1} +
    \|\theta_1 - \theta_2\|_2 \right)^2 - t^2 \right\} \\
&\geq t_{\theta_1} \left(t_{\theta_1} - t - \|\theta_1 - \theta_2\|_2
\right) - \frac{1}{2}\left\{\left(t_{\theta_1} +
    \|\theta_1 - \theta_2\|_2 \right)^2 - t^2 \right\}
\end{align*}
where we used \eqref{inc} for the last inequality above. Simplifying
the right hand side above, we deduce that
\begin{equation}\label{tpa}
  G_{\theta_2}(t_{\theta_1} + \|\theta_1 - \theta_2\|_2)  -
  G_{\theta_2}(t) \geq \frac{1}{2} \left(t - t_{\theta_1} \right)^2 -
  \frac{1}{2} \|\theta_1 - \theta_2\|_2^2 - 2 t_{\theta_1} \|\theta_1
  -   \theta_2\|_2.
\end{equation}
From this expression, it follows that
\begin{equation}\label{tpa.1}
  G_{\theta_2}(t_{\text{low}}) \leq G_{\theta_2}(t_{\theta_1} + \|\theta_1 -
  \theta_2\|_2) \leq G_{\theta_2}(t_{\text{up}})
\end{equation}
for
\begin{equation*}
  t_{\text{low}} := t_{\theta_1} - \sqrt{\|\theta_1 - \theta_2\|_2^2 +
  4 t_{\theta_1} \|\theta_1 - \theta_2\|_2} ~~ \text{ and }
t_{\text{up}} := t_{\theta_1} + \sqrt{\|\theta_1 - \theta_2\|_2^2 +
  4 t_{\theta_1} \|\theta_1 - \theta_2\|_2}
\end{equation*}
as long as $t_{\text{low}} \geq 0$. Inequality \eqref{tpa.1},
together with the fact that $G_{\theta_2}(\cdot)$ is strictly concave
on $[0, \infty)$ (this follows from
concavity of $m_{\theta}(\cdot)$ and the observation that $t \mapsto
-t^2/2$ is strictly concave) implies that $t_{\text{low}} \leq
t_{\theta_2} \leq t_{\text{up}}$  as long as $t_{\text{low}} \geq
0$. Because $t_{\theta_2}$ is always nonnegative, we deduce therefore
that
\begin{equation*}
  \left(t_{\theta_1} - \sqrt{\|\theta_1 - \theta_2\|_2^2 +
  4 t_{\theta_1} \|\theta_1 - \theta_2\|_2} \right)_+ \leq
t_{\theta_2} \leq t_{\theta_1} + \sqrt{\|\theta_1 - \theta_2\|_2^2 +
  4 t_{\theta_1} \|\theta_1 - \theta_2\|_2}
\end{equation*}
for every $\theta_1, \theta_2 \in \Theta$ which proves
\eqref{moo}. It is easy to see that inequality \eqref{smoo} is a
simple consequence of \eqref{moo}.
\end{proof}

\begin{proof}[Proof of Theorem \ref{erc}]
   Let $Z := X - \theta \sim N(0, I_n)$. By the calculation
   \begin{equation*}
     \|X - \alpha\|_2^2 = \|Z + \theta - \alpha\|_2^2 = \|Z\|_2^2 - 2
     \left<Z, \alpha - \theta \right> + \|\alpha - \theta\|_2^2,
   \end{equation*}
   it follows that
   \begin{equation}\label{a1}
     \widehat{\theta}(X; \Theta, f) = \argmax_{\alpha \in \Theta}
     \left\{\left<Z, \alpha - \theta \right> - f(\alpha) - \frac{1}{2}
       \|\alpha - \theta\|_2^2 \right\}
   \end{equation}
   For convenience, we shall write $\hat{\theta}$ for
   $\widehat{\theta}(X; \Theta, f)$ in the rest of the proof.

   From \eqref{a1}, one can see as follows that $\|\hat{\theta} -
   \theta\|_2$ maximizes the function $\tilde{A}_{\theta}(t)$ over $t
   \geq 0$ where
   \begin{equation*}
     \tilde{A}_{\theta}(t) := \sup_{\theta \in \Theta : \|\alpha -
       \theta\|_2 = t} \left\{\left<Z, \alpha - \theta \right> -
       f(\alpha) \right\} - \frac{t^2}{2}.
   \end{equation*}
   To see this, just observe that
   \begin{align*}
     \tilde{A}_{\theta}(\|\hat{\theta} - \theta\|_2) &= \sup_{\theta
       \in \Theta : \|\alpha - \theta\|_2 = \|\hat{\theta} -
       \theta\|_2} \left\{\left<Z, \alpha - \theta \right> - f(\alpha)
     \right\} - \frac{\|\hat{\theta} - \theta\|_2^2}{2} \\
&\geq \left<Z, \hat{\theta} - \theta \right> - f(\hat{\theta}) -
  \frac{\|\hat{\theta} - \theta\|_2^2}{2}  \\
&= \sup_{\alpha \in \Theta} \left(\left<Z, \alpha - \theta \right> - f(\alpha) -
  \frac{\|\alpha - \theta\|_2^2}{2}  \right) \\
&\geq \sup_{\alpha \in \Theta : \|\alpha - \theta\|_2 = t}
\left(\left<Z, \alpha - \theta \right> - f(\alpha) - \frac{\|\alpha -
    \theta\|_2^2}{2}  \right) \qt{for every $t \geq 0$} \\
&= \tilde{A}_{\theta}(t).
   \end{align*}
Using this fact, we shall now argue that $\|\hat{\theta} - \theta\|_2$
also maximizes the function $A_{\theta}(t)$ over $t \geq 0$ where
\begin{equation*}
  A_{\theta}(t) := \sup_{\theta \in \Theta : \|\alpha -
       \theta\|_2 \leq t} \left\{\left<Z, \alpha - \theta \right> -
       f(\alpha) \right\} - \frac{t^2}{2}.
\end{equation*}
Note that the difference between $\tilde{A}_{\theta}(t)$ and
$A_{\theta}(t)$ is that the supremum is taken over $\|\alpha -
\theta\|_2 = t$ in $\tilde{A}_{\theta}(t)$ while it is over the larger
set $\|\alpha - \theta\|_2 \leq t$ in $A_{\theta}(t)$. This in
particular means that $\tilde{A}_{\theta}(t) \leq A_{\theta}(t)$ for
every $t \geq 0$. To see that $\|\hat{\theta} - \theta\|_2$ maximizes
$A_{\theta}(t)$, fix $t \geq 0$ and $\eta > 0$. By definition of
$A_{\theta}(t)$, there exists $\alpha \in \Theta$ with $\|\alpha -
\theta\|_2 \leq t$ such that
\begin{equation*}
  A_{\theta}(t) \leq \left<Z, \alpha - \theta \right> - f(\alpha) +
  \eta - \frac{t^2}{2}.
\end{equation*}
Because $\|\alpha - \theta\|_2 \leq t$, we can write
\begin{align*}
  A_{\theta}(t) &\leq \left<Z, \alpha - \theta \right> - f(\alpha) +
  \eta - \frac{t^2}{2} \\
&\leq \left<Z, \alpha - \theta \right> - f(\alpha) +
  \eta - \frac{\|\alpha  - \theta\|_2^2}{2} \\
&\leq \tilde{A}_{\theta}(\|\alpha - \theta\|_2) + \eta \\
&\leq \tilde{A}_{\theta}(\|\hat{\theta} - \theta\|_2) + \eta
\qt{because $\|\hat{\theta} - \theta\|_2$ maximizes
  $\tilde{A}_{\theta}(\cdot)$} \\
&\leq A_{\theta}(\|\hat{\theta} - \theta\|_2) + \eta \qt{because
  $\tilde{A}_{\theta}(t) \leq A_{\theta}(t)$ for every $t \geq 0$}.
\end{align*}
Because $t \geq 0$ and $\eta > 0$ are arbitrary, we have proved that
$\|\hat{\theta} - \theta\|_2$ maximizes $A_{\theta}(t)$ over $t \geq
0$.

Note now that $A_{\theta}(t)$ is a concave
function of $t \geq 0$. This is because the function
\begin{equation*}
  t \mapsto \sup_{\alpha \in \Theta: \|\alpha - \theta\|_2 \leq t}
  \left(\left<Z, \alpha - \theta \right> - f(\alpha) \right)
\end{equation*}
is concave as shown in the proof of Lemma \ref{ttm}(1) and also
$t \mapsto -t^2/2$ is trivially concave. As a result of the concavity
of $A_{\theta}(\cdot)$, it follows that for every $\delta \geq 0$,
\begin{equation}\label{hp1}
  \|\hat{\theta} - \theta\|_2 < t_{\theta} + \delta
\end{equation}
provided
\begin{eqnarray}
&&  \max \left(\E_{\theta} A_{\theta}(t_{\theta}) - A_{\theta}(t_{\theta}),
    A_{\theta}(t_{\theta} + \delta) - \E_{\theta} A_{\theta}(t_{\theta} +
    \delta) \right) \nonumber \\
    &< & \frac{1}{2} \left\{\E_{\theta} A_{\theta}(t_{\theta})
  - \E_{\theta} A_{\theta}(t_{\theta} + \delta)\right\}.  \label{hp2}
\end{eqnarray}
To see this, assume that \eqref{hp2} holds for some $\delta \geq
0$. Then, if $B := (\E_{\theta} A_{\theta}(t_{\theta}) + \E_{\theta}
A_{\theta}(t_{\theta} + \delta))/2$, then \eqref{hp2} implies that
\begin{equation*}
  A_{\theta}(t_{\theta}) > \E_{\theta} A_{\theta}(t_{\theta}) - \frac{1}{2}
  \left(\E_{\theta} A_{\theta}(t_{\theta}) - \E_{\theta} A_{\theta}(t_{\theta} + \delta)
  \right) = B
\end{equation*}
and also
\begin{equation*}
  A_{\theta}(t_{\theta} + \delta) < \E_{\theta} A_{\theta}(t_{\theta} + \delta)
  + \frac{1}{2} \left(\E_{\theta} A_{\theta}(t_{\theta}) - \E_{\theta}
    A_{\theta}(t_{\theta} + \delta) \right) = B.
\end{equation*}
We thus have $A_{\theta}(t_{\theta} + \delta) < B <
A_{\theta}(t_{\theta})$. Because $A_{\theta}(\cdot)$ is concave, this
implies that every maximizer of $A_{\theta}$ has to be strictly
smaller than $t_{\theta} + \delta$ which proves \eqref{hp1}. From
\eqref{hp1}, we immediately have
\begin{eqnarray}\label{namo}
&&  \P_{\theta}  \left\{\|\hat{\theta} - \theta\|_2 \ge t_{\theta} + \delta
  \right\}   \\
  &\leq &  \P_{\theta} \left\{\E_{\theta} A_{\theta}(t_{\theta}) -
    A_{\theta}(t_{\theta}) \geq \frac{\Delta}{2} \right\} + \P_{\theta} \left\{
    A_{\theta}(t_{\theta} + \delta) - \E_{\theta} A_{\theta}(t_{\theta} +
    \delta)
    \geq \frac{\Delta}{2}\right\} \nonumber
\end{eqnarray}
where
\begin{equation*}
  \Delta := \E_{\theta} A_{\theta}(t_{\theta}) - \E_{\theta}
  A_{\theta}(t_{\theta} +
  \delta).
\end{equation*}
We now note that $\E_{\theta} A_{\theta}(t) = m_{\theta}(t) - t^2/2 =
G_{\theta}(t)$ where $G_{\theta}(t)$ and $m_{\theta}(t)$ are defined
in \eqref{gmd}. Therefore, from Lemma \ref{ttm}(4), we get
\begin{equation*}
  \Delta = G_{\theta}(t_{\theta}) - G_{\theta}(t_{\theta} + \delta)
  \geq \frac{\delta^2}{2}.
\end{equation*}
Thus \eqref{namo} gives
\begin{eqnarray*}
&&  \P_{\theta} \left\{\|\hat{\theta} - \theta\|_2 \ge t_{\theta} + \delta
  \right\} \\
  &\leq &  \P_{\theta} \left\{\E_{\theta} A_{\theta}(t_{\theta}) -
    A_{\theta}(t_{\theta}) \geq \frac{\delta^2}{4} \right\} + \P \left\{
    A_{\theta}(t_{\theta} + \delta) - \E_{\theta} A_{\theta}(t_{\theta} +
    \delta)
    \geq \frac{\delta^2}{4}\right\}
\end{eqnarray*}
We now use the trivial fact that for every $t \geq 0$, the quantity
$A_{\theta}(t)$, as a function of $Z$, is Lipschitz with Lipschitz
constant $t$. Therefore, by standard concentration for Lipschitz
functions of Gaussian random variables, we get
\begin{equation*}
  \P_{\theta} \left\{\|\hat{\theta} - \theta\|_2 \ge t_{\theta} + \delta
  \right\} \leq \exp \left(\frac{-\delta^4}{32 t_{\theta}^2} \right) +
  \exp \left(\frac{-\delta^4}{32 (t_{\theta} + \delta)^2} \right) \leq
  2 \exp \left(\frac{-\delta^4}{32 (t_{\theta} + \delta)^2} \right)
\end{equation*}
which proves inequality \eqref{erc.3}.

   We now turn to the proof of \eqref{1co}. For convenience, let $L :=
   \|\widehat{\theta}(X; \Theta, f) - \theta\|_2$. First assume that
   $t_{\theta} \geq 1$. Using \eqref{erc.3}, we can write
   \begin{align*}
     \P_{\theta} \left\{L \geq
       t_{\theta} + x \sqrt{t_{\theta}} \right\} \leq 2 \exp
     \left(\frac{-x^4}{32 \left(1 + x t^{-1/2}_{\theta} \right)^2}
     \right)  \leq 2 \exp \left(\frac{-x^4}{32(1 + x)^2} \right).
   \end{align*}
   As a result, via the identity $\E X_+^2 = 2 \int_0^{\infty} x \P
   \{X \geq x\} dx$ which holds for every random variable $X$, we
   obtain
   \begin{align*}
    \E_{\theta} \left(\frac{L - t_{\theta}}{\sqrt{t_{\theta}}} \right)^2_+ &=
     2 \int_0^{\infty} x ~ \P_{\theta} \left\{L \geq t_{\theta} + x \sqrt{t_{\theta}} \right\} dx
     \\
&\leq 4 \int_0^{\infty} x ~ \exp \left(\frac{-x^4}{32(1 + x)^2}
\right) dx \leq 84.
   \end{align*}
  where we have also used the fact that the integral above is at most
  $21$ (as can be verified by numerical computation). Note that the
  above bound also implies that
  \begin{equation*}
    \E_{\theta} \left(\frac{L - t_{\theta}}{\sqrt{t_{\theta}}} \right)_+ \leq
     \sqrt{84}.
  \end{equation*}
  Thus if $L := \|\widehat{\theta}(X; \Theta, f) - \theta\|_2$, then
  the inequality
  \begin{equation}\label{yako}
    L \leq (L - t_{\theta})_+^2 + t_{\theta}^2 + 2 t_{\theta} (L - t_{\theta})_+
  \end{equation}
together with the above two bounds for $\E (L - t_{\theta})_+^2$ and
$\E_{\theta} (L - t_{\theta})_+$ proves \eqref{1co} in the case when
$t_{\theta} \geq 1$.

 When $t_{\theta} \leq 1$, inequality \eqref{erc.3} gives
 \begin{equation*}
   \P_{\theta} \left\{L \geq t_{\theta} + x\right\} \leq 2 \exp
   \left(\frac{-x^4}{32(t + x)^2} \right) \leq 2 \exp
   \left(\frac{-x^4}{32(1 + x)^2} \right)
 \end{equation*}
  which implies, as before, that
  \begin{equation*}
    \E_{\theta} (L - t_{\theta})_+^2 \leq 84 ~~ \text{ and } ~~ \E_{\theta} (L -
    t_{\theta})_+ \leq \sqrt{84}.
  \end{equation*}
 so that the required inequality \eqref{1co} again follows from
 \eqref{yako}.
\end{proof}

\section*{Acknowledgment}
We are thankful to Sivaraman Balakrishnan for helpful discussions.

\bibliographystyle{chicago}
\bibliography{AG}

\end{document}